      \theoremstyle{plain}
      \newtheorem{thm}{Theorem}[section]
      \newtheorem{proposition}[thm]{Proposition}
      \newtheorem{lemma}[thm]{Lemma}
      \newtheorem{cor}[thm]{Corollary}
      \theoremstyle{definition}
      \newtheorem{defn}{Definition}
       \newtheorem{remark}{Remark}[section]    
      \numberwithin{equation}{section}
      \newcommand{\nn}{\nonumber}
      \newcommand{\R}{\mathbb{R}}
      \newcommand{\Z}{\mathbb{Z}}
      \newcommand{\Vsp}{\phantom{\Big\vert}}
\begin{document}

\title[Stick Index of Knots and Links in the Cubic Lattice]{Stick Index of Knots and Links in the Cubic Lattice}

\author[Adams]{Colin Adams}
\address{Colin Adams, Department of Mathematics and Statistics, Williams College, Williamstown, MA 01267}
\email{Colin.C.Adams@williams.edu}
\author[Chu]{Michelle Chu}
\address{Michelle Chu, Department of Mathematics, University of Texas, 1 University Station, C1200
Austin, Texas 78712}
\email{mchu@math.utexas.edu}
\author[Crawford]{Thomas Crawford}
\address{Thomas Crawford, Department of Mathematics and Statistics, Williams College, Williamstown, MA 01267}
\email{Thomas.N.Crawford@williams.edu}
\author[Jensen]{Stephanie Jensen}
\address{Stephanie Jensen, Department of Mathematics and Statistics, Williams College, Williamstown, MA 01267}
\email{Stephanie.A.Jensen@williams.edu}
\author[Siegel]{Kyler Siegel}
\address{Kyler Siegel, Department of Mathematics, Stanford University, Department of Mathematics, Building 380, Stanford, California 943057}
\email{kylers@math.stanford.edu}
\author[Zhang]{Liyang Zhang}
\address{Liyang Zhang, Department of Mathematics and Statistics, Williams College, Williamstown, MA 01267}
\email{Liyang.Zhang@williams.edu}

\keywords{lattice stick number, torus knots, satellite knots, composite knots}
\subjclass[2000]{57M50}

\begin{abstract}
The cubic lattice stick index of a knot type is the least number of sticks necessary to construct the knot type in the $3$-dimensional cubic lattice. 
We present the cubic lattice stick index of various knots and links, including all $(p,p+1)$-torus knots, and show how composing and taking satellites can be used to obtain the cubic lattice stick index for a relatively large infinite class of knots. Additionally, we present several bounds relating cubic lattice stick index to other known invariants. 
\end{abstract} 

\date{\today}
\maketitle

\begin{section}{Introduction}
An important invariant in physical knot theory is the stick index, namely the least number of sticks (i.e. line segments) needed to construct a given knot in $3$-space. In this paper we are interested in a related invariant, the cubic lattice stick index, which is the least number of sticks (without regard to length) necessary to construct a knot in the $3$-dimensional cubic lattice $(\R \times \Z \times \Z) \cup (\Z \times \R \times \Z) \cup (\Z \times \Z \times \R)$. Since we do not refer to other lattices, we will usually omit the word {\em cubic}. Throughout, we will let $K$ represent a fixed conformation of a knot and $[K]$ the collection of all conformations of a given knot type.We begin with some definitions:

\begin{defn}
A {\em stick} of a lattice stick conformation $K$ is a maximal line segment of $K$ in $\R^3$, i.e. a line segment contained in $K$ that is not contained in any longer line segment of $K$. An {\em x-stick} of $K$ is a stick parallel to the $x$-axis. We define $y$-sticks and $z$-sticks similarly.
\end{defn}

\begin{defn}
The {\em lattice stick number} $s_{CL}(K)$ of a lattice stick conformation $K$ is the number of constituent sticks of $K$. We 
minimize $s_{CL}(K)$ over all lattice stick conformations $K \in [K]$ and we call this number the {\em lattice stick index} $s_{CL}[K]$ of the knot type $[K]$. A lattice stick conformation realizing this minimum is called a {\em minimal lattice conformation}. 
\end{defn}

Previously,  lattice stick index was known for three knots. In 1993, Diao proved that $s_{CL}[3_1]=12$ (\cite{Diao}).
Then in 1999, Promislow and Rensburg proved that $s_{CL}[9_{47}]=18$ (\cite{Promislow and Rensburg}), and in 2005 Huh and Oh proved that $s_{CL}[4_1]=14$ (\cite{Huh and Oh}).
Huh and Oh also proved in 2010 that the trefoil and figure eight are the only knots with 
lattice stick index less than $15$ (\cite{Huh and Oh 2}).

In Section \ref{determining} we present a lower bound on lattice stick index based on bridge number. We use this to find the lattice stick indices of $8_{20}$, $8_{21}$, and $9_{46}$, as well as all $(p,p+1)$-torus knots.
In Section \ref{composition}, we show how {\em exterior L}'s and {\em clean L}'s can be used to compose minimal lattice stick conformations to obtain the minimal lattice stick conformation of the composite knot.
In Section \ref{satellites}, we show how known minimal lattice stick conformations can be used to determine lattice stick indices for  various satellite knots. 
In Section \ref{crossing}, we present both upper and lower bounds on $s_{CL}$ in terms of crossing number.
Finally, in Section \ref{links}, we consider the minimal lattice stick index of certain links of more than one component.

\label{intro}
\end{section}

\begin{section}{Determining Cubic Lattice Stick Index}

We use a lower bound based on bridge index and an upper bound based on construction to determine the lattice stick index of many knots.

 \begin{defn} The {\bf bridge index} of a knot is given by      
  $$b[K] = \underset{K \in [K]}{\text{min}}\: \underset{\mathbf {v} \in S^2}{\text{min}} (\text{\# of local maxima when K is projected to }  \mathbf{v})$$ 
      \end{defn}
      
      Bridge index was first introduced in \cite{Schubert} and has proved to be an extremely useful invariant.  The following lemma was also found by Promislow and Rensburg in \cite{Promislow and Rensburg}.

\begin{lemma}
\label{lem:lowerbound} $s_{CL} [K] \geq 6b[K]$.
\end{lemma}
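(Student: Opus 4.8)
The plan is to bound the numbers of $x$-, $y$-, and $z$-sticks of a minimal lattice conformation separately, each from below by $2b[K]$, and then add the three bounds. So fix a minimal lattice conformation $K$ of $[K]$ and write $n_x$, $n_y$, $n_z$ for its numbers of $x$-, $y$-, and $z$-sticks, so that $s_{CL}[K] = n_x + n_y + n_z$. By the symmetry among the three coordinate directions it suffices to show $n_z \geq 2b[K]$.

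To get this I would project $K$ onto the $z$-axis, i.e. consider the height function $h\colon K \to \R$, $h(p) = z(p)$. The key point is that $h$ is constant along every $x$- and $y$-stick and strictly monotone along every $z$-stick. Traversing $K$ once and recording only the $z$-sticks gives a cyclic sequence $Z_1, \dots, Z_{n_z}$, each labeled $+$ or $-$ according to whether $z$ increases or decreases along it; between consecutive $Z_i$ lies a nonempty arc of horizontal sticks on which $h$ is constant (nonempty because two adjacent $z$-sticks sharing an endpoint would be collinear, hence a single stick, contradicting maximality). Such a constant arc is a local maximum of $h$ precisely when the $z$-stick immediately preceding it (in the traversal) has label $+$ and the one immediately following has label $-$. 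Hence the number of local maxima of $h$ equals the number of $+$-to-$-$ transitions in the cyclic label sequence, which in turn equals the number of $-$-to-$+$ transitions; since there are only $n_z$ adjacent pairs in the cycle, the total number of transitions is at most $n_z$, so the number of local maxima is at most $n_z/2$.

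On the other hand, applying the definition of $b[K]$ to this particular conformation $K$ and the particular direction $(0,0,1)\in S^2$ shows the number of local maxima of $h$ is at least $b[K]$. Combining the two bounds gives $b[K] \le n_z/2$, i.e. $n_z \ge 2b[K]$. The identical argument for the $x$- and $y$-axes gives $n_x \ge 2b[K]$ and $n_y \ge 2b[K]$, and adding the three inequalities yields $s_{CL}[K] = n_x + n_y + n_z \ge 6b[K]$.

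The only step requiring real care is the meaning of ``number of local maxima'' for this piecewise-linear height function: projecting to a coordinate axis is non-generic and produces plateaus rather than isolated maxima, so one must use the convention (compatible with the minimum over all of $S^2$ in the definition of $b[K]$) that each maximal plateau counts once; the run-counting above produces exactly that count. Everything else is bookkeeping. One should also dispose first of the degenerate possibility $n_x=0$, $n_y=0$, or $n_z=0$, in which $K$ lies in a coordinate plane and hence bounds a disk, so that for nontrivial $[K]$ one has $n_x,n_y,n_z\ge 1$ and the cyclic sequence above is nonempty.
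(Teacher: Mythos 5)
Your proof is correct and follows essentially the same route as the paper's: for each coordinate direction the height function on the conformation has at least $b[K]$ local maxima, and each maximal plateau accounts for at least two sticks in that direction (your cyclic transition count is just a repackaging of the paper's assignment of two descending $z$-sticks to each local $z$-maximum), giving $2b[K]$ sticks per direction and $6b[K]$ in total. Your explicit remarks on counting plateaus once and on excluding planar (hence trivial) conformations make precise points the paper leaves implicit, but do not change the argument.
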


\begin{proof} 
Let $K \in [K]$ be a minimal lattice stick conformation. Then $K$ must have at least $b[K]$ local maxima in each of the $x,y$ and $z$ directions.
First considering the $z$ direction, let $S$ be a portion of $K$ corresponding to a local $z$-maximum
and contained in a certain $z$-level. Let $s_1$ and $s_2$ be the first $z$-sticks we encounter as we continue along $K$ in either direction, starting at $S$.
Since $S$ corresponds to a local maximum, $s_1$ and $s_2$ must both connect the $z$-level of $S$ to lower $z$-levels. 
Then for each local $z$-maximum of $K$ we have two distinct corresponding $z$-sticks, and it follows that $K$ must contain at least $2b[K]$ $z$-sticks.
A similar argument for the $x$ and $y$ directions shows that $K$ must contain at least $2b[K]$ sticks in each of the three primary directions.
\end{proof}
Note that by the same argument, we must have two $z$-sticks corresponding to each local $z$-minimum, and similarly for the $y$ and $z$
directions. Then if $K$ is a lattice stick conformation constructed with exactly $6b[K]$ sticks, each stick must connect a local minimum to a local maximum
in its own direction and lie at a local extremum in the other two primary directions.

\begin{thm}
Knots $8_{20}$, $8_{21}$, and $9_{46}$ all have lattice stick index $18$.
\end{thm}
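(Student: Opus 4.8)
The plan is to sandwich $s_{CL}$ between $18$ and $18$. For the lower bound I would invoke Lemma~\ref{lem:lowerbound}: each of $8_{20}$, $8_{21}$, and $9_{46}$ is a non‑alternating knot, hence not $2$‑bridge (every $2$‑bridge knot is alternating), so its bridge index is at least $3$; conversely each admits a visibly $3$‑bridge presentation — $9_{46}$ is the pretzel knot $P(3,3,-3)$, while $8_{20}$ and $8_{21}$ are closures of $3$‑braids — so $b=3$ in every case and $s_{CL}[K]\ge 6b[K]=18$.

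For the upper bound I would exhibit, for each of the three knots, an explicit lattice stick conformation using exactly $18$ sticks. The remark following Lemma~\ref{lem:lowerbound} pins down the shape of any such conformation: in each coordinate direction there are exactly $2b=6$ sticks, alternately ascending and descending between three $z$‑levels (resp.\ $x$‑ and $y$‑levels), and every stick joins a local minimum of its own coordinate to a local maximum while lying at an extremum of the other two coordinates. This rigidity drastically narrows the search space, so one locates the conformations — by hand or by a short computer search — by encoding each candidate as a cyclic list of integer lattice points, and then identifies the knot type, for instance by computing its Jones or Alexander polynomial or by simplifying the associated diagram to a recognizable form. The written proof would then simply display the three conformations (as coordinate lists or, more readably, as figures) together with the remark that each is checked directly to represent $8_{20}$, $8_{21}$, and $9_{46}$ respectively.

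The main obstacle is precisely this upper bound: there is no general mechanism guaranteeing an $18$‑stick realization for an arbitrary $3$‑bridge knot, so the three conformations must be produced individually, and the step most prone to error is verifying that a given concrete lattice polygon is the claimed knot type. One further point worth making explicit in the writeup is that, once the three $18$‑stick conformations are in hand, their minimality is automatic from Lemma~\ref{lem:lowerbound}, so no separate minimality argument is required.
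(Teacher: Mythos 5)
Your proposal is correct and matches the paper's argument: the paper likewise cites Lemma~\ref{lem:lowerbound} with $b=3$ for the lower bound of $18$ and supplies explicit $18$-stick lattice conformations (its Figure~\ref{sporadicknots}) for the upper bound. Your added justification that the three knots are $3$-bridge (non-alternating, hence not $2$-bridge, plus visible $3$-bridge presentations) is a detail the paper simply asserts, but the overall route is the same.
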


\begin{proof}
Knots $8_{20}$, $8_{21}$, and $9_{46}$ are all $3$-bridge knots. Lemma \ref{lem:lowerbound} and Figure \ref{sporadicknots} complete the proof.

\end{proof}

\begin{figure}[h]
\center
\includegraphics[height=1.3in]{}
\caption{Minimal lattice conformations of $8_{20}$, $8_{21}$, and $9_{46}$.}
\label{sporadicknots}
\end{figure}

Next we find the lattice stick index for an infinite class of prime knots, namely the $(p,p+1)$-torus knots, which we denote $T_{p, p+1}$. 

\begin{thm} $s_{CL}[T_{p, p+1}] = 6p$ for $p\geq 2$. \end{thm}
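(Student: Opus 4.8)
For the lower bound, I note that the torus knot $T_{p,p+1}$ has bridge index $p$ (this is the classical fact that $b[T_{p,q}] = \min(p,q)$, due to Schubert). Then Lemma~\ref{lem:lowerbound} immediately gives $s_{CL}[T_{p,p+1}] \geq 6p$. So the entire content of the theorem is the upper bound: I must exhibit, for each $p \geq 2$, a lattice stick conformation of $T_{p,p+1}$ using exactly $6p$ sticks.

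For the upper bound, I would proceed by explicit construction. The key structural constraint comes from the remark following Lemma~\ref{lem:lowerbound}: in a conformation meeting the bound with equality, every stick must run from a local minimum to a local maximum in its own direction while sitting at a local extremum in each of the other two directions. This is very rigid, so the construction essentially builds itself once one has the right picture. The natural model is to think of $T_{p,p+1}$ as lying on a torus, winding $p$ times one way and $p+1$ times the other; I would realize this as a ``staircase'' pattern. Concretely, I expect to use $2p$ sticks in each of the three directions: arrange $2p$ parallel $z$-sticks whose tops and bottoms are joined by $x$-sticks and $y$-sticks in a cyclic pattern that produces the $(p,p+1)$ torus braiding. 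I would present the construction via a figure for small $p$ (say $p = 2, 3$), then describe the general pattern inductively — adding one ``loop'' to the staircase increases $p$ by one and adds exactly six sticks (two in each direction) while changing the knot type from $T_{p,p+1}$ to $T_{p+1,p+2}$.

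The main obstacle is verifying that the constructed conformation is genuinely $T_{p,p+1}$ rather than some other knot (or a cable/connect-sum). I would handle this by exhibiting the conformation so that a suitable planar projection is manifestly the standard $(p,p+1)$ torus knot diagram — e.g., projecting along a diagonal direction so the $2p$ vertical sticks become the $2p$ strands of a braid, and checking that the top and bottom connections implement the braid word $(\sigma_1 \sigma_2 \cdots \sigma_{p-1})^{p+1}$ whose closure is $T_{p,p+1}$. The base case $p = 2$ is the trefoil $T_{2,3}$ with $s_{CL} = 12$, which recovers Diao's result and serves as a sanity check. A secondary technical point is confirming that all the stated segments are genuinely \emph{maximal} line segments (so the stick count is exactly $6p$ and not fewer), which follows from checking that consecutive sticks in the conformation always turn rather than continue straight; this is built into the staircase design.
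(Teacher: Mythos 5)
Your proposal matches the paper's proof: the lower bound comes from $b[T_{p,p+1}]=p$ (Schubert) together with Lemma \ref{lem:lowerbound}, and the upper bound comes from an explicit $6p$-stick ``staircase'' construction shown for small $p$ and extended by a pattern that adds six sticks (two per direction) for each increase of $p$. Your extra care in verifying the knot type via the braid word $(\sigma_1\cdots\sigma_{p-1})^{p+1}$ is a welcome addition, but the route is essentially identical to the paper's.
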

\begin{proof} By \cite{Schubert}, we know $b[T_{p, p+1}]=p$. Figure \ref{torus} shows a construction of the first three knots
 in the class of $(p,p+1)$-torus knots using $6p$ sticks. Using the pattern in Figure \ref{torus}, we can construct the rest of 
the $(p, p+1)$-torus knots with $6p$ sticks. Lemma \ref{lem:lowerbound} completes the proof.
\begin{figure}
\includegraphics[height=1.5in]{}
\caption{Minimal lattice conformations of $T_{2, 3}, T_{3, 4}$, and $T_{4, 5}$.}
\label{torus}
\end{figure}
\end{proof}

\label{determining}
\end{section}

\begin{section}{Composition}

We begin by defining clean $L$'s and exterior $L$'s of a lattice stick conformation.

\begin{defn}
Let $S$ be a portion of a lattice stick conformation $K$ consisting of four consecutive sticks $s_1,s_2,s_3,s_4$, 
where $s_1$ and $s_4$ are parallel to each other and $s_2$ and $s_3$ are perpendicular  both to each other and to $s_1$ and $s_4$.
Let $R$ be the closed rectangle with two edges of which are given by $s_2$ and $s_3$. If $s_1$ and $s_4$ lie to the same side of the plane containing $R$ and 
$R \cap K \subset S$, we call $S$ a {\em clean L} of $K$.
If in addition $K$ is entirely contained in one of the two closed half-spaces defined by the plane containing $R$, we call
$S$ an {\em exterior L}.
\end{defn}

In \cite{Schubert}, Schubert showed that $b[K_{1}\#K_{2}]=b[K_{1}]+b[K_{2}]-1$. 
We use this fact and Lemma \ref{lem:lowerbound} to prove the following theorem for lattice stick index under composition.

\begin{thm}
Let $K_{1}$ and $K_{2}$ be minimal lattice conformations realizing $s_{CL}[K_{i}]=6b[K_{i}]$ for $i=1,2$ such that $K_{1}$ 
has an exterior $L$ and $K_{2}$ has a clean $L$. Then $s_{CL}[K_{1}\#K_{2}]=s_{CL}[K_{1}]+s_{CL}[K_{2}]-6$.
\end{thm}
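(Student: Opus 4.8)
The plan is to prove the two inequalities separately. For the upper bound $s_{CL}[K_1\#K_2]\le s_{CL}[K_1]+s_{CL}[K_2]-6$, I would give an explicit construction: cut open the exterior $L$ of $K_1$ and the clean $L$ of $K_2$ and splice the two conformations together along these $L$'s, showing that the result is a lattice conformation of $K_1\#K_2$ using $6$ fewer sticks than the naive concatenation. The point of the hypothesis is that the exterior $L$ of $K_1$ sits on the boundary of a half-space containing all of $K_1$, so we may place $K_2$ (suitably translated and scaled) entirely on the far side of the plane containing the rectangle $R$, with its clean $L$ matched up to the exterior $L$ of $K_1$; because $R\cap K_1\subset S$ and $R\cap K_2\subset S'$, the only intersections occur along the $L$'s themselves, and no unwanted crossings or overlaps are introduced. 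When we perform the connect sum by deleting the innermost pieces of $s_2,s_3$ (the perpendicular sticks of each $L$) and rerouting, four of the eight sticks making up the two $L$'s get merged in pairs into two sticks, so we lose $4+2=6$ sticks total; the resulting knot type is $K_1\#K_2$ because the splice is supported in a ball meeting each $K_i$ in a trivial arc. I would include a figure illustrating the splice, in the style of the earlier figures.

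For the lower bound $s_{CL}[K_1\#K_2]\ge s_{CL}[K_1]+s_{CL}[K_2]-6$, I would simply invoke Lemma \ref{lem:lowerbound} together with Schubert's additivity of bridge index:
\[
s_{CL}[K_1\#K_2]\ge 6\,b[K_1\#K_2]=6\bigl(b[K_1]+b[K_2]-1\bigr)=6b[K_1]+6b[K_2]-6=s_{CL}[K_1]+s_{CL}[K_2]-6,
\]
where the last equality uses the hypothesis that $K_1,K_2$ realize $s_{CL}[K_i]=6b[K_i]$. This half is essentially immediate.

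I would organize the write-up so that the main effort goes into the construction: first describe coordinates for the exterior $L$ $S=s_1s_2s_3s_4$ of $K_1$ and the clean $L$ $S'=s_1's_2's_3's_4'$ of $K_2$; then explain how to translate, reflect, and rescale $K_2$ so that $S'$ lies just outside the half-space of $K_1$ with $s_2',s_3'$ lined up parallel to $s_2,s_3$; then specify exactly which sub-sticks to delete and which endpoints to reconnect, and count sticks carefully to confirm the loss is exactly $6$; finally verify that the connect-sum sphere can be taken to be a small sphere around the splice region, so the knot type is correct, and that the clean/exterior hypotheses guarantee the construction is embedded.

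The main obstacle I anticipate is the embeddedness and stick-count bookkeeping in the splicing construction — one has to be careful that after rerouting, the merged sticks really are maximal line segments (so that two old sticks genuinely become one, not two collinear-but-separate sticks), that no new intersections are created between the relocated $K_2$ and $K_1$ away from the $L$'s (this is where "exterior" versus merely "clean" is essential for $K_1$), and that the deleted portions don't accidentally disconnect the conformation. Everything else — the lower bound, the identification of the knot type — is routine given the lemma and Schubert's theorem.
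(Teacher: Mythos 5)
Your overall plan matches the paper's: the lower bound via Lemma \ref{lem:lowerbound} and Schubert's additivity of bridge number is exactly the paper's argument, and your splice-and-count bookkeeping (eight sticks in the two $L$'s become two, a net loss of six) is also the paper's. However, there is a genuine gap in your upper-bound construction. You assert that, after matching the clean $L$ of $K_2$ to the exterior $L$ of $K_1$, you ``may place $K_2$ (suitably translated and scaled) entirely on the far side of the plane containing the rectangle $R$.'' That would require $K_2$ to lie in a single closed half-space bounded by the plane of its clean $L$'s rectangle, which is precisely the \emph{exterior} condition that $K_2$ is not assumed to satisfy. A clean $L$ only guarantees $R_2\cap K_2\subset L_2$ and that the two attaching sticks lie to one side of the plane of $R_2$; the rest of $K_2$ may pass over $R_2$, even one lattice level above it, and no rigid motion or scaling of $K_2$ changes this. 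Nor can you escape by shrinking $K_1$ to tuck it into a thin empty slab: lattice conformations can only be scaled up by integer factors, not down. So the embeddedness issue you yourself flag as the main obstacle is exactly the point your proposal does not resolve, and the cleanness hypotheses $R\cap K_i\subset S$ that you cite control intersections with the rectangles only, not with the half-space where $K_1$ must sit.

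The paper supplies the missing device: before placing $K_1$, it modifies $K_2$ by ``expansion'' moves that preserve both the knot type and the stick count. Concretely, with $R_2$ at $z=0$ and the attaching sticks of $L_2$ in $z\le 0$, translate every vertex of $K_2$ with $z>0$ upward by a large integer $n_0$ (lengthening the $z$-sticks that cross $z=0$), so that no $x$- or $y$-sticks of $K_2$ lie in $0<z\le n_0$; then perform the analogous expansion in each of the four lateral half-spaces determined by the edges of $R_2$. This clears out an arbitrarily large region of the half-space $z\ge 0$ around $R_2$, into which $K_1$ --- which genuinely does lie in a single half-space, by the exterior hypothesis --- can be placed (after scaling up so that $R_1$ and $R_2$ become congruent squares) disjointly from $K_2$ except along the matched $L$'s. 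With that step added, your deletion-and-consolidation count and the identification of the knot type as $K_1\#K_2$ go through as you describe, and the lower bound finishes the proof as you wrote it.
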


\begin{proof} 
Let $L_1$ be an exterior $L$ of $K_1$ and let $L_2$ be a clean $L$ of $K_2$. Let $R_1$ and $R_2$ be the closed rectangles defined by the middle two sticks of $L_1$ and $L_2$ respectively. Assume that $R_2$ lies at the level $z=0$ with the first and last sticks of $L_2$ lying in the region $z \leq 0$. 
We now $(+z)$-translate all vertices of $K_2$ lying in $z > 0$ by some large integer $n_0$, and modify the sticks of $K_2$ accordingly so that $K_2$ is still a minimal lattice stick conformation of the same knot type, but with no $x$-sticks or $y$-sticks in the region $0 < z \leq n_0$.
We call this move {\em expanding $K_2$ in the half-space $z\geq0$ by $n_0$}.

Now let $P_1,P_2,P_3,P_4$ denote the four planes that are perpendicular to the plane containing $R_2$ and that contain an edge of $R_2$. 
Let $S_i$ denote the closed half-space defined by $P_i$ that does not contain $R_2$.
Similar to before, for $n=1,2,3,4$ we expand the $K_2$ in $S_i$ by $n_i$, where $n_i$ is a sufficiently large integer.

The resulting $K_2$ after these five expansions is still a minimal lattice stick conformation of the same knot type. The advantage is that we are now in a position to place $K_1$ (i.e. rotate and translate it) such that it lies in $z\geq 0$ with $R_1$ lying at $z=0$, and such that $K_1$ and $K_2$ are disjoint except possibly
at $L_1$ and $L_2$. By scaling up $K_2$ in either the $x$ or $y$ direction, we can make $R_2$ a square. We can then 
scale up $K_1$ (without it intersecting $K_2$, since $n_0,...,n_4$ were chosen sufficiently large) such that $R_1$ is a square of the same size.
After a possible rotation, $R_1 \cap K_1$ and $R_2 \cap K_2$ are now identical. We remove $R_1$ and $R_2$ and consolidate the remaining four sticks of $L_1$ and $L_2$ into two $z$-sticks (see Figure \ref{Flo:L3}). The result is a lattice stick conformation of $[K_1\#K_2]$ with $s_{CL}[K_1] + s_{CL}[K_2] - 6$ sticks, which is minimal by Lemma \ref{lem:lowerbound}. 

\begin{figure}[b]
\includegraphics[height=1.3in]{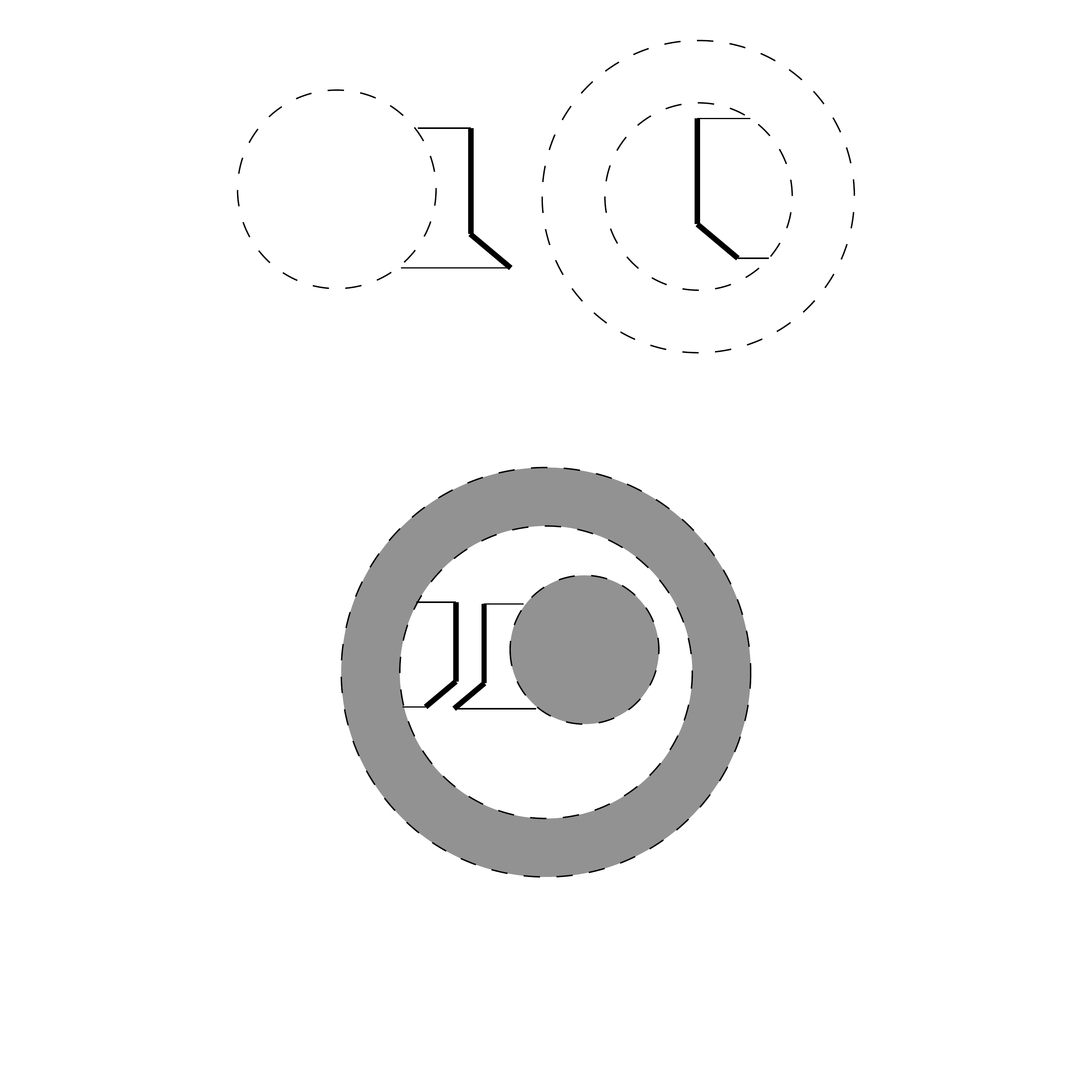}
\includegraphics[height=1.3in]{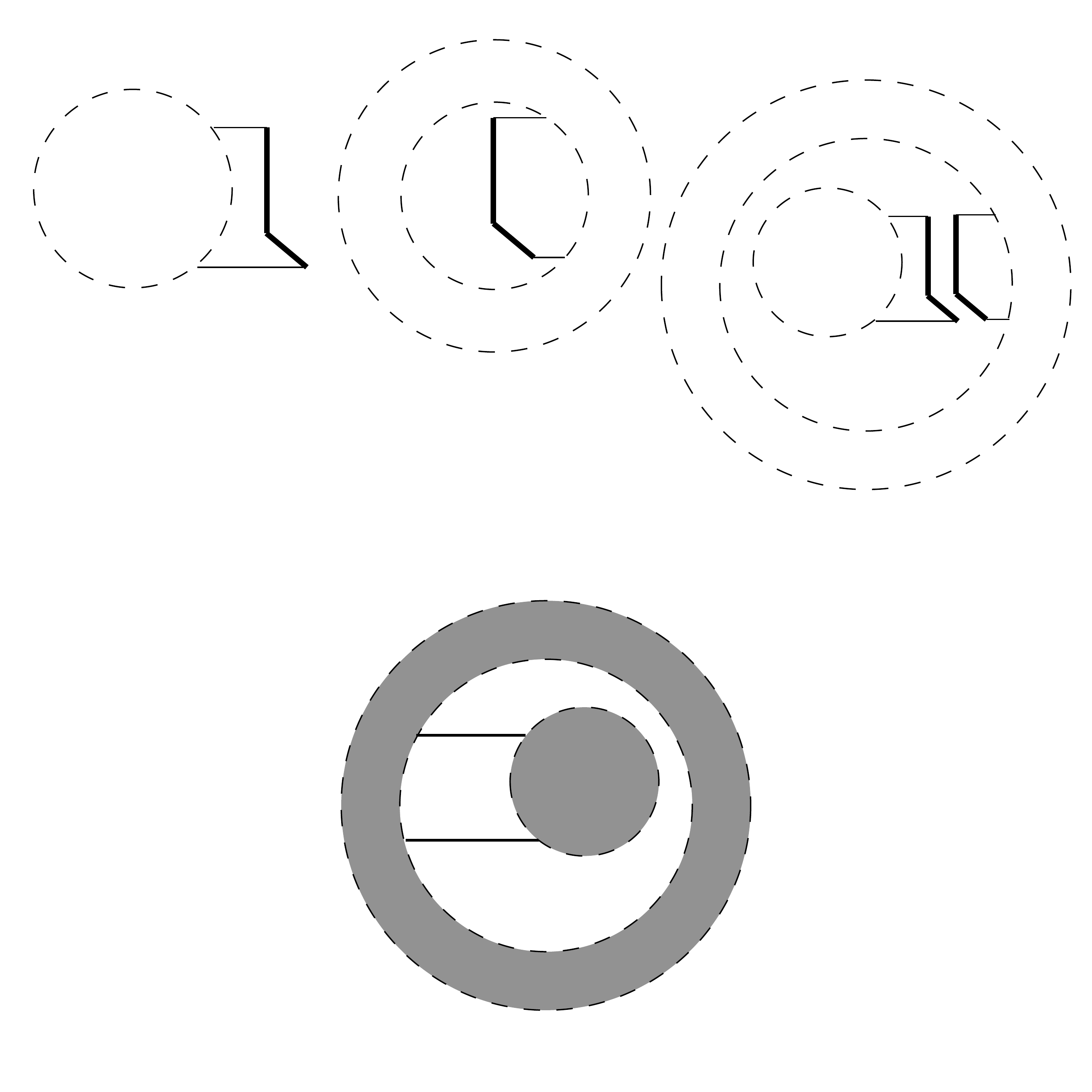}\caption{Gluing $L_1$ and $L_2$ by eliminating $6$ sticks.}
\label{Flo:L3}
\end{figure}
\end{proof}

The following corollary follows by induction.

\begin{cor}
For $K_{1}, \dots, K_{n}$ minimal lattice knot conformations realizing $s_{CL}[K_{i}]=6b[K_i]$
for $i=1,...,n$ each possessing one exterior L and at least one other clean
L, $s_{CL}[K_{1}\# \dots \#K_{n}]=s_{CL}[K_{1}]+ \dots +s_{CL}[K_{n}]-6(n-1)$. 
\end{cor}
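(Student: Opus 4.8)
The plan is to prove the corollary by induction on $n$, using the composition theorem as the inductive step. The base case $n=1$ is trivial, since the statement reduces to $s_{CL}[K_1] = s_{CL}[K_1]$. For the inductive step, assume the formula holds for any $n-1$ such knots, and consider $K_1, \dots, K_n$ each a minimal lattice conformation realizing $s_{CL}[K_i] = 6b[K_i]$ and each possessing one exterior $L$ and at least one additional clean $L$.

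The key technical point is to arrange the bookkeeping so that the composition theorem applies at each stage and so that the hypothesis ``one exterior $L$ plus at least one other clean $L$'' is preserved. First I would apply the inductive hypothesis to $K_1, \dots, K_{n-1}$ to obtain a minimal lattice conformation $K' := K_1 \# \cdots \# K_{n-1}$ with $s_{CL}[K'] = s_{CL}[K_1] + \cdots + s_{CL}[K_{n-1}] - 6(n-2)$, which by Schubert's bridge formula $b[K_1 \# \cdots \# K_{n-1}] = \sum_{i=1}^{n-1} b[K_i] - (n-2)$ indeed equals $6b[K']$, so $K'$ is a minimal conformation of the required ``efficient'' type. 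The point I must be careful about: in building $K'$ via repeated application of the composition theorem, I consume one clean $L$ and one exterior $L$ at each gluing and produce the two new consolidated $z$-sticks; I need $K'$ to still retain an exterior $L$ (for the final gluing to $K_n$) and, if one wants the corollary to iterate further, at least one more clean $L$. Here I would note that in the construction in the proof of the composition theorem, $K_1$ (the conformation contributing the exterior $L$) can be taken after the expansions and scalings to still lie in a half-space, so the exterior $L$ of $K_1$ that was \emph{not} used survives as an exterior $L$ of $K'$; similarly each $K_i$ contributes its spare clean $L$'s, which persist under the translations, scalings, and rotations used in the gluing since those are affine lattice-preserving moves that do not disturb the rectangle-emptiness condition. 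Thus $K'$ has an exterior $L$ and $K_n$ has a clean $L$ (and an exterior one), so the composition theorem gives $s_{CL}[K' \# K_n] = s_{CL}[K'] + s_{CL}[K_n] - 6 = s_{CL}[K_1] + \cdots + s_{CL}[K_n] - 6(n-2) - 6 = s_{CL}[K_1] + \cdots + s_{CL}[K_n] - 6(n-1)$, using associativity of connect sum to identify $K' \# K_n$ with $K_1 \# \cdots \# K_n$.

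The main obstacle I anticipate is precisely the persistence of the required $L$'s through the composition process: one must check that (a) removing $R_1$ and $R_2$ and consolidating does not destroy the \emph{other} clean/exterior $L$'s that were present in $K_1$ and $K_2$, and (b) the half-space condition defining an exterior $L$ can be re-established for the glued conformation. For (a), the spare $L$'s of $K_2$ survive because the expansions, the scaling to make $R_2$ square, and the final deletion all happen ``far away'' from any spare $L$ when the expansion integers $n_0, \dots, n_4$ are chosen large enough; for (b), after gluing, $K_1$'s unused exterior $L$ still has all of $K'$ on one side of its defining plane, because $K_1$ was placed entirely in a half-space and $K_2$ was pushed off to one side by the expansions. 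I would make these observations explicit but brief, since they follow the same geometric reasoning already used in the proof of the composition theorem. Finally I would record that the hypothesis as stated — ``one exterior $L$ and at least one other clean $L$'' — is exactly what is needed to feed the induction, with the lone exterior $L$ always reserved for the next gluing and the extra clean $L$'s supplying the clean $L$ needed on the other side.
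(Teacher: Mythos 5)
Your overall plan---induction on $n$ with the composition theorem as the inductive step, strengthened so that the partial composite retains enough $L$-structure to continue---is exactly the one-line induction the paper intends, and your arithmetic via Schubert's formula is correct. But the key bookkeeping step is wrong as you state it. In the composition theorem the exterior $L$ of $K_1$ is precisely the $L$ that gets consumed: $L_1$ is \emph{defined} to be an exterior $L$ of $K_1$, and the construction removes $R_1$ and $R_2$ and consolidates the remaining four sticks of $L_1$ and $L_2$. So there is no ``exterior $L$ of $K_1$ that was not used''---each $K_i$ has only one exterior $L$ by hypothesis, and once it is handed to the theorem it is destroyed. Moreover, the geometric claim you lean on, that the exterior-$L$ knot can be taken to lie in a half-space with the rest of the composite on the other side, fails in general: the construction places the scaled copy of $K_1$ in the slab between $z=0$ and $z=n_0$, while the expansion by $n_0$ pushes parts of $K_2$ \emph{above} that slab, so $K_1$ is sandwiched inside $K_2$ and none of its surviving clean $L$'s need be exterior for the composite; similarly the unused exterior $L$ of $K_2$ need not stay exterior (for instance if its defining plane is the plane $z=0$ containing $R_2$, the glued-on $K_1$ lies on the wrong side of it). Hence your induction never actually establishes that $K'$ has an exterior $L$, which is the hypothesis you invoke at the final step.

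The gap is closed by swapping the roles, which is what the hypothesis ``one exterior $L$ and at least one other clean $L$'' is designed for (and you half-notice this in your parenthetical that $K_n$ also has an exterior $L$): at each stage let the incoming knot $K_{i+1}$ play the role of the exterior-$L$ knot in the theorem, using its own as-yet-untouched exterior $L$, and let the running composite play the clean-$L$ knot, the clean $L$ being the spare clean $L$ of the most recently attached $K_i$ (an exterior $L$ is in particular clean, so $K_1$ can also seed the pool). Then the induction only needs the much weaker statement that the composite always retains a clean $L$, and this does follow from the ``sufficiently large expansion'' reasoning you sketch: the translations, expansions and scalings preserve the condition $R\cap K\subset S$, and the new material is attached far from the spare $L$. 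With that role reversal your computation $s_{CL}[K'\#K_n]=s_{CL}[K']+s_{CL}[K_n]-6=s_{CL}[K_1]+\dots+s_{CL}[K_n]-6(n-1)$ goes through verbatim, with minimality again supplied by Lemma 2.2 and Schubert's additivity of bridge number.
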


\label{composition}
\end{section}

\begin{section}{Lattice Stick index of Certain Satellite Knots}

In this section we find the cubic lattice stick index of certain $n$-string  satellite knots.  We show that if we are given a companion knot $[J]$ with $s_{CL}[J]=6b[J]$, certain satellites of $[J]$ have cubic lattice stick index $6nb[J]$.

A \emph{braid pattern} $B$ is defined to be a conformation of any knot in braid form that can be contained in a standardly embedded solid torus such that any meridinal disk of the torus intersects $B$ at least $n$ times, where $n$ is the number of strands.  To form a satellite knot, the solid torus containing the pattern is homeomorphically mapped to a neighborhood of the companion knot, carrying the pattern along.   We say that a \emph{braid satellite knot} is a satellite knot that has a braid pattern.  We only consider braid patterns with a limited number of crossings, depending on the conformation of the companion knot.

\begin{defn}
We say a stick $S$ in the cubic lattice has torsion $\frac{\pi}{2}$ if when we project to a perpendicular plane (i.e. ``look down $S$"), the projection of the adjacent sticks form an angle of $\frac{\pi}{2}$. We call $S$ a \emph{torsion stick}. 
\end{defn}

Note that every lattice stick conformation of a nontrivial knot cannot lie in a plane and therefore has at least two torsion sticks.
Let $J$ be a minimal lattice conformation of $[J]$. Let $J_n$ be the conformation of the $n$ component link formed by duplicating $J$ $(n-1)$ times and translating each copy by the vector $(1,1,-1)$ from the previous.  We assume that we have scaled $J$ up large enough so that no two copies intersect and so that all copies remain isotopic to one another in the resulting link complement.

\begin{defn}
Define $\omega$, a braid word, to be a \emph{permutation word} if any two strings of $w$ cross each other at most once and each crossing is in the same direction (all right over left or all left over right). The second condition is equivalent to the braid being either a positive braid or  a negative braid.
\end{defn}

\begin{defn} We say a conformation of knot or link  $K$ \emph{differs from $J_n$ by  a permutation word $\omega$} if there exist $n$ parallel sticks of $J_n$ obtained from the translation process applied to a single stick of J which together yield a trivial braid, that when replaced by the permutation word $\omega$ yields $K$.
\end{defn}

\begin{proposition}
Satellite knots $K$ that differ from $J_n$ by a single permutation word $\omega$ have cubic lattice stick index 6nb[J].
\end{proposition}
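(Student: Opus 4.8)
The plan is to establish the two inequalities $s_{CL}[K] \geq 6nb[J]$ and $s_{CL}[K] \leq 6nb[J]$ separately. The lower bound is immediate: since $K$ is a satellite of $[J]$ with an $n$-string braid pattern, and the companion $[J]$ is nontrivial with bridge index $b[J]$, a standard fact about braid satellites gives $b[K] \geq n\,b[J]$ (the meridian disks of the companion torus each meet the pattern $n$ times, so a bridge presentation of $K$ induces one for $n$ parallel copies of $J$). Then Lemma~\ref{lem:lowerbound} yields $s_{CL}[K] \geq 6b[K] \geq 6nb[J]$. The bulk of the work is the upper bound, i.e.\ exhibiting an explicit lattice conformation of $K$ using exactly $6nb[J]$ sticks.

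For the construction, I would start from the minimal conformation $J$ of $[J]$, which has exactly $6b[J]$ sticks, and form $J_n$ by the translation-by-$(1,1,-1)$ process described before the proposition; this has $6nb[J]$ sticks (each stick of $J$ becomes $n$ parallel sticks) and is a conformation of the $n$-component link of parallel copies of $[J]$. The key point is that $J_n$ realizes the satellite with the \emph{trivial} braid pattern. Now I would use the remark following Lemma~\ref{lem:lowerbound}: since $J$ has exactly $6b[J]$ sticks, every stick of $J$ lies at a local extremum in the two directions perpendicular to it, and in particular $J$ has at least two torsion sticks. Pick a torsion stick $s$ of $J$; its $n$ parallel translates in $J_n$ form a trivial braid (the bundle of parallel sticks sits at a local extremum in the transverse plane, so nothing obstructs the strands), and I would replace this trivial braid by the permutation word $\omega$, inserting the crossings using additional sticks. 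The delicate accounting is to show this can be done \emph{without increasing the stick count}: because $s$ is a torsion stick, there is "room" on both ends of the bundle — the adjacent sticks turn through a right angle — so the crossings of a positive (or negative) permutation word can be absorbed by rerouting the existing $n$ parallel sticks and the sticks adjacent to them, subdividing them within their own lattice directions, and the fact that $\omega$ is a permutation word (each pair crosses at most once, all crossings the same sign) is exactly what guarantees the rerouting is monotone and hence adds no net sticks.

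After checking that the resulting conformation is indeed a conformation of $K$ (it differs from $J_n$ by the single permutation word $\omega$, which is the defining property of $K$) and still has $6nb[J]$ sticks, minimality follows from the lower bound already established, or equivalently from Lemma~\ref{lem:lowerbound} directly. The main obstacle I anticipate is the stick-counting bookkeeping in the crossing-insertion step: one must verify carefully that realizing an arbitrary positive permutation word on $n$ strands at a torsion stick — possibly requiring the strands to weave past each other — can always be arranged so that each of the $n$ original parallel sticks, together with its neighbors, is merely bent within its allotted lattice directions rather than requiring genuinely new maximal segments. This is where the hypotheses "single permutation word" and "torsion stick" (via the remark after Lemma~\ref{lem:lowerbound}) do all the real work, and a picture analogous to Figure~\ref{Flo:L3} is essentially mandatory to make the argument convincing.
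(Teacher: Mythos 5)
Your overall strategy matches the paper's (lower bound from $b[K]\geq nb[J]$ plus Lemma~\ref{lem:lowerbound}; upper bound by modifying $J_n$ at a torsion stick), but the heart of the proposition is the explicit stick-preserving construction, and that is exactly the step you leave open. You acknowledge it yourself: you propose to insert the crossings of $\omega$ by ``rerouting'' and ``subdividing'' the $n$ parallel copies of the torsion stick and their neighbors ``within their own lattice directions,'' hoping a monotonicity property of positive permutation words makes this cost nothing. As stated this does not work: bending or subdividing a stick creates new maximal segments, so any rerouting of that kind increases the stick count, and no argument is given for why it would not. So the proposal has a genuine gap precisely where the hypotheses are supposed to ``do all the real work.''

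The paper's mechanism is different and more rigid. Let $a$ be a torsion stick of $J$ with adjacent sticks $b$ and $c$; because $a$ is a torsion stick, the projections of $b$ and $c$ down $a$ are perpendicular. In $J_n$ the copies $b_1,\dots,b_n$ and $c_1,\dots,c_n$ form two perpendicular families of parallel sticks, offset in the $a$-direction by the $(1,1,-1)$ translations (Figure~\ref{Flo:S1}). To realize $\omega$, one does \emph{not} bend anything: each $b_i$ and the corresponding $c_{\omega(i)}$ are merely lengthened or shortened (by $\omega(i)-i$ units) so that their endpoints are aligned along the $a$-direction, and they are joined by a single new stick parallel to $a$ (Figure~\ref{Flo:S2}). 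The $n$ sticks parallel to $a$ are thus replaced by $n$ sticks parallel to $a$, and the $b$'s and $c$'s only change length, so the total remains $6nb[J]$. The permutation-word hypothesis (each pair of strands crosses at most once, all crossings of the same sign) is what guarantees that the over/under pattern forced by the fixed height offsets is exactly the braid $\omega$; an arbitrary braid word could not be realized by this grid connection. Two smaller omissions: the paper also notes that which stick of $J$ carries the replacement is irrelevant up to knot equivalence (needed since the definition of $K$ may specify a different stick than your chosen torsion stick), and the existence of a torsion stick needs only that $J$ is a nonplanar closed polygon, not the remark after Lemma~\ref{lem:lowerbound}.
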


\begin{proof}
 By \cite{Schubert},  $b[K] \geq nb[J]$.  Thus, $s_{CL}[K]\geq 6nb[J]$.   Note that $J_n$ has $6nb[J]$ sticks.  We claim there is a transformation from $J_n$ to $K$ which preserves the number of sticks.

Note that the particular stick in $J$ to which we apply the process of replacing the trivial braid given by the $n$ corresponding parallel sticks in $J_n$ by the braid given by $\omega$ is irrelevant, as all of the resulting knots will be equivalent. Since $J$ is a knotted polygon in space and hence does not lie in a plane, there exists a torsion stick, say $a$. Label the two adjacent sticks $b$ and $c$.  Label the sticks in $J_n$ corresponding to $b$ and $c$ as $b_1, b_2, \dots , b_n$ and $c_1, c_2, \dots ,c_n$ respectively.  Projecting down $a$, we see a picture as in Figure \ref{Flo:S1}.
The permutation word $\omega$ requires each $b_i$ to connect to some $c_j$. To do this, extend or shorten sticks $b_i$ and $c_j$ by $j-i$ units and connect them with a vertical stick. (See Figure \ref{Flo:S2}.)  This construction of $K$ with $6nb[J]$ sticks gives us $s_{CL}[K] \leq 6nb[J]$. 
\end{proof}

\begin{figure}
\includegraphics[height=1.3in]{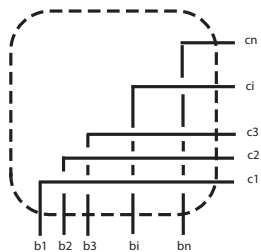}
\label{Flo:S1}
\caption{Projection to the plane perpendicular to $a$.}
\end{figure}

\begin{figure}
\includegraphics[height=1.3in]{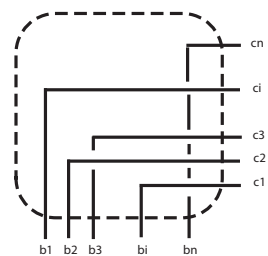}
\caption{Permutation given $\omega(b_1)=c_i$ and $\omega(b_i)=c_1$.}
\label{Flo:S2}
\end{figure}

\begin{cor}
Let $S_n$ be the collection of knots which differ from $J_n$ by at most one permutation word for each torsion stick of J.  All knots in $S_n$ have cubic lattice stick index $6nb[J]$.  
\end{cor}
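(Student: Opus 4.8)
The plan is to reduce the corollary to the preceding proposition by an inductive argument on the number of torsion sticks at which we perform a permutation-word replacement. First I would observe that the lower bound $s_{CL}[K] \geq 6nb[J]$ is immediate for every $K \in S_n$: any such $K$ is an $n$-string satellite of $[J]$, so $b[K] \geq nb[J]$ by Schubert's inequality, and Lemma \ref{lem:lowerbound} gives $s_{CL}[K] \geq 6b[K] \geq 6nb[J]$. So the whole content is the upper bound, namely exhibiting a lattice conformation of each $K \in S_n$ with exactly $6nb[J]$ sticks.

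For the upper bound I would proceed as follows. Fix $K \in S_n$, and let $a_1, \dots, a_m$ be the torsion sticks of $J$ at which $K$ calls for a (possibly trivial) permutation word $\omega_1, \dots, \omega_m$. Starting from $J_n$, which has $6nb[J]$ sticks, I apply the construction in the proof of the Proposition at the bundle of $n$ parallel sticks corresponding to $a_1$: I extend or shorten the adjacent sticks $b_i, c_j$ and insert the $n$ connecting vertical sticks realizing $\omega_1$, in such a way that the stick count is unchanged (the $n$ trivial vertical sticks of that bundle are simply replaced by $n$ new vertical sticks). The key point to check is that this local modification, carried out in a thin neighborhood of $a_1$ (after first scaling $J_n$ up enough, exactly as in the Proposition), does not disturb the other torsion sticks $a_2, \dots, a_m$ or their adjacent sticks, so the bundles corresponding to $a_2, \dots, a_m$ are still available and still present trivial braids when projected down their respective torsion sticks. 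Then I repeat at $a_2$, then $a_3$, and so on; after $m$ steps I obtain a lattice conformation of $K$ with $6nb[J]$ sticks, giving $s_{CL}[K] \leq 6nb[J]$.

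The main obstacle is verifying the independence of the local moves: I must ensure the $m$ torsion-stick neighborhoods in which the surgeries take place can be made pairwise disjoint, and that performing the surgery at $a_k$ leaves the geometry near $a_{k+1}, \dots, a_m$ intact (in particular that each $a_j$, $j > k$, still has the "look down the stick" picture of Figure \ref{Flo:S1} with its two adjacent sticks meeting at a right angle). Since distinct torsion sticks of $J$ are distinct line segments, after a global scaling of $J$ their neighborhoods are disjoint, and the surgery at $a_k$ only alters sticks incident to $a_k$ and the newly inserted verticals, which live in that neighborhood; a torsion stick $a_j$ could only be affected if it shares an endpoint with $a_k$, in which case one of its adjacent sticks is $a_k$ itself, but lengthening or shortening that shared stick does not change the projected angle seen looking down $a_j$, so the picture at $a_j$ is preserved. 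One should also confirm that the resulting link-component structure is still that of the satellite $K$: each replacement is exactly the braid-word substitution defining membership in $S_n$, so iterating them realizes precisely the knot $K$. With these points settled, the corollary follows by induction on $m$, the base case $m = 0$ being $J_n$ itself and the base case $m = 1$ being the Proposition.
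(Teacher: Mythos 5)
Your proposal is correct and follows essentially the same route as the paper: repeat the Proposition's local surgery at each torsion stick, observing that each replacement only alters the bundle of $n$ sticks corresponding to that torsion stick (and lengths of its neighbors), so the constructions at distinct torsion sticks do not interfere, while the lower bound $6nb[J]$ comes from Schubert's inequality and Lemma \ref{lem:lowerbound} as before. Your write-up is in fact somewhat more careful than the paper's two-sentence argument, since you explicitly address the case of torsion sticks that are adjacent to one another.
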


\begin{proof}
Since the algorithm in the previous proposition only changes the positions of one set of $n$ sticks (the set corresponding to the original torsion stick), each torsion stick can allow $K$ to differ from $J_n$ by another permutation word.  So all knots $K\in S_n$ can be constructed in $6nb[J]$ sticks.  Therefore $s_{CL}[K]=6nb[J]$ for $K\in S_n$.
\end{proof}

\begin{figure}
\includegraphics[height=2.8in]{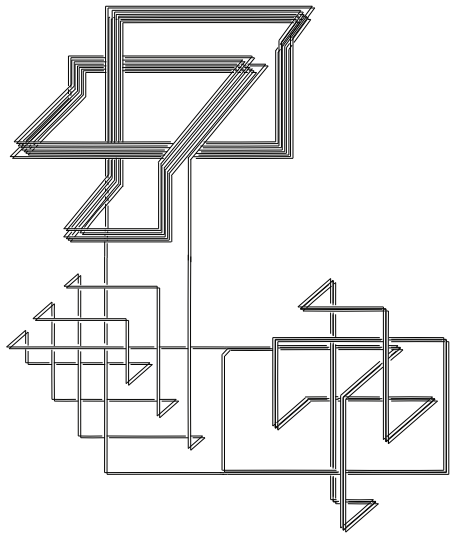}
\caption{A $2$-string satellite of (a $5$-string satellite of $3_1$ composed with the $(4,5)$-torus knot composed with a $2$-string satellite of $9_{46}$) with cubic lattice stick index $216$.}
\label{biscuit}
\end{figure}

We observe that taking satellites and composing knots are two operations that can be done independently, allowing us to determine cubic lattice stick index for a variety of different knots and knot classes. (See Figure \ref{biscuit}).

\label{satellites}
\end{section}

\begin{section}{Crossing Number Bounds}
For a lattice stick conformation, let $|P_x|$, $|P_y|$, and $|P_z|$ denote the number of $x,y$ and $z$ sticks respectively.
Following Huh and Oh in \cite{Huh and Oh}, we observe that we can always modify the lengths of sticks of a lattice 
knot conformation $K$, without changing the number of sticks, and translate the knot so that the vertices of $K$ lie in 
the set $\{1,...,|P_x|\} \times \{1,...,|P_y|\} \times \{1,...,|P_z|\}$, and such that exactly two $x$-sticks have an endpoint at $x = i$ for any $i \in \{1,...,|P_x|\}$. We can do the same for the $y$-sticks and $z$-sticks concurrently. We call such a lattice knot conformation {\em properly leveled}. We can also extend this concept to links, the only distinction occurring when a trivial component lies entirely in a plane, say $x = k$. In this case we can arrange it so that {\em no} $x$-sticks have endpoints at the level $x = k$, but we will need more than $|P_x|$ levels. Suppose a lattice link conformation $K$ has exactly $C_x$ trivial components lying in planes perpendular to the $x$-axis. Then we will call $K$ {\em properly leveled} if the endpoints of $x$-sticks lie in a subset of the levels $x=1\;,...,\;x=|P_x| + C_x$, where each level contains either two endpoints of $x$-sticks or a component of $K$, with similar conditions for $y$ and $z$. As with lattice knots, we observe that every lattice link can be made properly leveled without altering the number of sticks in the conformation. 

\begin{remark}\label{CanMakeKnotDiagram}
Let $K$ be a properly leveled cubic lattice conformation of a link (in particular it could be a knot), and let $P$ be the 
projection of $K$ down the $z$-axis. Note that $P$ is not necessarily a valid link diagram (i.e. a planar projection of a link conformation in $\R^3$ such that no more than two points project to the same point, where double points are transverse intersections).  Suppose that several $x$-sticks $s_1,\dots ,s_n$ all project to the same line $y = k$ in $P$. Unless there is a trivial component at $y = k$, there are exactly two $y$-sticks, $t_a$ and $t_b$, which are connected to the level $y = k$. Let us slightly $y$ translate $s_1,\dots,s_n$ such that each lies on a different line, and such that the $s_i$'s are in the $y$
order given by the $z$ coordinates (see Figure \ref{bwlatticeperturbedandunperturbed}). We can add very short line segments corresponding to the $z$-sticks and shorten or lengthen $t_a$ and $t_a$, and continue this procedure for every set of line segments whose images under $z$-projection lie on the same line (using an analogous procedure to separate $y$-sticks) such that the result is a link diagram $P'$. Here $P'$ is a $z$-projection of a slight perturbation of $K$ (not necessarily remaining in the cubic lattice), made up of line segments corresponding to the sticks of $K$, where $z$-stick images are very short and are not involved in any crossings.

\begin{figure}
 \centering
 \includegraphics[clip=true,scale=.35]{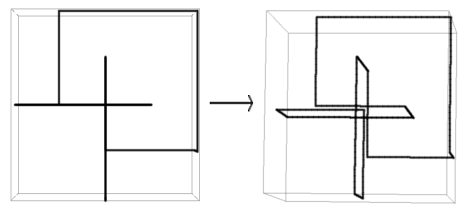}
 \caption{Perturbing a primary projection.} \label{bwlatticeperturbedandunperturbed}
\end{figure}

The advantage of $P'$ is that as a link diagram we can use it to make statements about crossing number, linking number, and 
other invariants defined via link diagrams. We switch between $P$ and $P'$ as is convenient.
\end{remark}
  
We now give bounds relating the cubic lattice stick index $s_{CL}[K]$ of a knot $[K]$ to its crossing index $c[K]$. 
We abbreviate $s_{CL}[K]$ and $c[K]$ by $s_{CL}$ and $c$ respectively. 

\begin{thm}
$s_{CL}[K] \geq 3\sqrt{c[K] + 2}$. 
\end{thm}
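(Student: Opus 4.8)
The plan is to bound the crossing number of a knot in terms of the numbers $|P_x|, |P_y|, |P_z|$ of sticks in each of the three primary directions, and then optimize. First I would put $K$ in properly leveled position and pass to the perturbed projection $P'$ down the $z$-axis, as in Remark \ref{CanMakeKnotDiagram}; since the $z$-stick images are very short and involved in no crossings, every crossing of $P'$ is a transverse intersection of (the image of) an $x$-stick with (the image of) a $y$-stick. There are $|P_x|$ lines of the form $y = \text{const}$ carrying the $x$-stick images and $|P_y|$ lines $x = \text{const}$ carrying the $y$-stick images, but the two extreme levels in each direction are where sticks ``turn around,'' so a careful count shows the number of such intersection points — and hence $c[K]$ — is at most $(|P_x| - 1)(|P_y| - 1)$, or more crudely at most $|P_x|\,|P_y|$. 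By symmetry the same projection argument down the $x$- and $y$-axes gives $c[K] \le |P_y|\,|P_z|$ and $c[K] \le |P_x|\,|P_z|$ as well.

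Multiplying the three inequalities gives $c[K]^3 \le (|P_x|\,|P_y|\,|P_z|)^2$, hence $|P_x|\,|P_y|\,|P_z| \ge c[K]^{3/2}$. Now $s_{CL}[K] = |P_x| + |P_y| + |P_z|$, and by the arithmetic–geometric mean inequality $|P_x| + |P_y| + |P_z| \ge 3(|P_x|\,|P_y|\,|P_z|)^{1/3} \ge 3\,c[K]^{1/2}$. To sharpen the $+2$: one observes that the naive bound $c \le |P_x|\,|P_y|$ is never tight because the extreme levels contribute turnbacks rather than crossings, so in fact $c[K] \le (|P_x|-1)(|P_y|-1)$ in each projection; tracking the resulting constant through the AM–GM step upgrades $3\sqrt{c}$ to $3\sqrt{c+2}$. (Alternatively, since $s_{CL}$ and $c$ are integers, one can absorb the improvement by checking that equality in AM–GM forces $|P_x| = |P_y| = |P_z|$, and then a direct parity/integrality argument on the small cases yields the extra $+2$ under the square root.)

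The main obstacle is the careful combinatorial bookkeeping in the first step: justifying that at most $(|P_x|-1)(|P_y|-1)$ crossings appear in the perturbed $z$-projection. One must argue that the outermost stick-levels in each direction cannot host genuine crossings — essentially because a stick occupying an extreme level has both neighbors going the same way, so its image, after the perturbation of Remark \ref{CanMakeKnotDiagram}, can be pushed past all parallel images without creating a double point — and simultaneously confirm that the perturbation does not introduce spurious crossings among images that were coincident in $P$. Once this count is nailed down, the remaining steps are just the two elementary inequalities above.
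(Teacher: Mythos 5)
The core of your argument --- $c \le |P_x|\,|P_y|$ from the perturbed $z$-projection, the two analogous bounds from the $x$- and $y$-projections, and then multiply-and-AM--GM --- is correct and is a legitimately different optimization from the paper's (the paper projects only once, down the axis with the most sticks, and maximizes $|P_x|\,|P_y|$ subject to $|P_x|+|P_y|\le 2s_{CL}/3$); both routes give $s_{CL}\ge 3\sqrt{c}$ with the same constant. The gap is the sharpening to $3\sqrt{c+2}$, which is precisely the step you flag as ``the main obstacle'' and do not prove. The claimed refinement $c\le(|P_x|-1)(|P_y|-1)$ is unjustified, and the heuristic behind it does not hold up. The count itself is off: $x$-stick images lie on at most $|P_y|$ lines $y=\mathrm{const}$ (one per $y$-level), not $|P_x|$, several $x$-sticks may share a line, and the extreme $y$-level may contain no $x$-stick at all (the two $y$-sticks reaching $y_{\max}$ can be joined there by a single $z$-stick), so ``delete the two extreme lines'' does not convert into $(|P_x|-1)(|P_y|-1)$. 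Worse, the assertion that extreme-level sticks host no crossings is false as stated: in the perturbation of Remark \ref{CanMakeKnotDiagram}, a $y$-stick that must be lengthened to reach the displaced line of the top-level $x$-stick it is attached to can sweep across another top-level $x$-stick whose $x$-span contains its $x$-coordinate, creating a genuine crossing at the extreme level; ruling this out would require controlling the perturbation, which is exactly the bookkeeping you defer. The integrality fallback also cannot close the gap: for $c=7$ your bound yields only $s_{CL}\ge\lceil 3\sqrt{7}\,\rceil=8$ while the theorem asserts $s_{CL}\ge 3\sqrt{9}=9$, and knowing that equality in AM--GM would force $|P_x|=|P_y|=|P_z|$ says nothing about the non-equality cases.

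The repair is cheap and is the paper's own observation: going around the knot, the cyclic sequence of $x$- and $y$-sticks must switch type at least twice, and an $x$-stick and a $y$-stick that are attached --- directly or through a $z$-stick --- produce no crossing between them in $P'$. Hence $c+2\le |P_x|\,|P_y|$, and by the same argument in the other two projections $c+2\le |P_y|\,|P_z|$ and $c+2\le |P_x|\,|P_z|$. Feeding these into your own multiplicative step gives $(c+2)^{3}\le\bigl(|P_x|\,|P_y|\,|P_z|\bigr)^{2}$, and AM--GM then yields $s_{CL}=|P_x|+|P_y|+|P_z|\ge 3\bigl(|P_x|\,|P_y|\,|P_z|\bigr)^{1/3}\ge 3\sqrt{c+2}$, which is the stated theorem.
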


\begin{proof} Let $P$ and $P'$ be projections of $K$ down the $z$-axis, as in Remark \ref{CanMakeKnotDiagram}. Since every crossing in $P'$ arises from an intersection between an $x$-stick image and a $y$-stick image, we have immediately \begin{align}
c \leq (|P_x|)(|P_y|). \nn
\end{align}

After a possible renaming we can assume $|P_x| \leq |P_z|$ and $|P_y| \leq |P_z|$. 
Then $|P_x| + |P_y| \leq \dfrac{2s_{CL}}{3}$, so $(|P_x|)(|P_y|)$ is certainly bounded from above by the maximization of $(|P_x|)(|P_y|)$ constrained by
\begin{align}
|P_x| + |P_y| = \dfrac{2s_{CL}}{3}.\nn
\end{align}
This gives us
\begin{align*}
c \leq \dfrac{s_{CL}^2}{9}.
\end{align*}
We can  improve this bound slightly by noting that whenever an $x$-stick and a $y$-stick are directly attached or attached via a $z$-stick in between, there is no crossing in $P'$ between these two sticks. This must occur at least twice in $P'$, so we have
\begin{align}
c \leq \dfrac{s_{CL}^2}{9} - 2.
\end{align}

Inverting this inequality yields the result.
\end{proof}

We now find an upper bound for $s_{CL}$ as a function of $c$. Recall the following definitions as can be found in \cite{Cromwell}.

\begin{defn}
An {\em arc presentation} of a knot is an open-book presentation in which there are finitely many half-plane pages,  such that each page meets the knot in a single simple arc.
The {\em arc index} of a knot $[K]$, denoted $\alpha[K]$, or simply $\alpha$,
 is  the least number of pages in any arc presentation of $[K]$
  \end{defn}

\begin{lemma}
$s_{CL}[K] \leq 6\alpha[K] - 16$.
\end{lemma}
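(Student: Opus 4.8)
The plan is to take an arc presentation of $[K]$ with $\alpha = \alpha[K]$ pages and convert it directly into a cubic lattice stick conformation, carefully counting sticks and identifying savings at the two ``corners'' of the construction.

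\textbf{Setting up the correspondence.} An arc presentation with $\alpha$ pages has a binding axis and $\alpha$ half-plane pages; the knot meets each page in a single simple arc, and there are exactly $\alpha$ points on the binding axis (each an endpoint shared by two consecutive arcs, after suitable relabeling). I would model the binding axis as the $z$-axis and place the $\alpha$ binding points at integer heights $z = 1, \dots, \alpha$. Each page becomes a half-plane of the form $\{(r\cos\theta_i, r\sin\theta_i, z) : r \ge 0\}$; since we are in the cubic lattice we cannot use arbitrary angles, so instead each ``page'' will be realized as an L-shaped pair consisting of one $x$-stick and one $y$-stick (or, more precisely, a path that leaves the $z$-axis along an $x$-stick, turns via a $y$-stick, and returns — this is essentially the ``grid diagram'' picture of an arc presentation). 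Concretely, I would pass to the grid-diagram form of the arc presentation: an $\alpha \times \alpha$ grid with one $\times$ and one $\circ$ in each row and column, vertical segments connecting marks in the same column and horizontal segments connecting marks in the same row. This is exactly the combinatorial data of an arc presentation with $\alpha$ arcs, and it almost immediately gives a lattice conformation.

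\textbf{Counting sticks.} A grid diagram with grid number $\alpha$ has $\alpha$ horizontal segments and $\alpha$ vertical segments, for $2\alpha$ segments in the plane. To lift this to an embedded conformation in the cubic lattice we must resolve the $\alpha$ crossings where a vertical segment passes a horizontal segment: at each such crossing we push the (say) vertical strand up to a higher $z$-level and back down, which replaces that vertical segment's contribution by a small number of $z$-sticks. Doing this naively costs too much, so the key is to route all the ``over'' strands at distinct $z$-levels as in Remark \ref{CanMakeKnotDiagram}-style bookkeeping, turning each over-strand into three sticks (up $z$-stick, horizontal stick, down $z$-stick) but sharing structure. A cleaner route: each of the $\alpha$ arcs of the arc presentation is planar and can be realized by exactly $3$ sticks in the lattice (two of them meeting the axis, one ``outer'' stick), except that two specially chosen consecutive arcs can be amalgamated to share sticks along the axis, and the axis segments between consecutive binding points can be absorbed. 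I would count: $\alpha$ arcs at $3$ sticks each gives $3\alpha$; but each binding point is shared by two arcs and the two axis-adjacent sticks there can be consolidated, and there is additional doubling because each arc needs a stick ``out'' and a stick ``back'' giving $6$ sticks per page in the worst naive count, i.e. $6\alpha$, from which the two savings at the designated clean corners remove $16$. The bound $6\alpha - 16$ strongly suggests: $6\alpha$ from $\alpha$ pages each needing $6$ lattice sticks, minus $16 = 2 \cdot 8$ from two places where an especially efficient local configuration (analogous to a clean $L$, eliminating $8$ sticks each) can be installed.

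\textbf{The main obstacle.} The hard part is not the global count but verifying embeddedness while keeping the count tight: when all the arcs are flattened into the cubic lattice and the over/under information is imposed by choosing $z$-levels, one must check that no unintended intersections occur and that the two ``savings'' corners can actually be realized simultaneously on any arc presentation (not just on favorable ones). I expect this to require an explicit local model — a picture — showing the efficient corner gadget and checking that it is compatible with the generic page structure, plus an argument (probably by choosing the two corners at binding points where the incident arcs are ``short'' in the grid, which always exist) that the $-16$ is globally available. I would assume the grid-diagram $\leftrightarrow$ arc-presentation dictionary from \cite{Cromwell} and devote the real work to the local stick-count lemma at a page and at a savings corner.
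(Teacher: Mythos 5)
There is a genuine gap: you never actually produce the lattice conformation or the stick count. Both of the key numbers are reverse-engineered from the statement rather than derived --- you write that the bound ``strongly suggests'' $6$ sticks per page and two savings of $8$, but no gadget realizing $6$ sticks per page on an arbitrary arc presentation is exhibited, and no ``corner'' configuration saving $8$ sticks is defined, let alone shown to exist in every minimal arc presentation. Since the entire content of the lemma is precisely this explicit construction together with its count and an embeddedness check, the proposal as written is a plan, not a proof (you say as much in your last paragraph). Moreover, the crossing-resolution step in your grid-diagram route is exactly where a naive count fails: a grid diagram of size $\alpha$ can have on the order of $\alpha^2$ crossings, so ``push the vertical strand up and back down at each crossing'' gives a bound quadratic in $\alpha$; to get a linear count you must lift entire segments to fixed $z$-levels, and that is the step you leave unproven. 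Your alternative sketch (``each arc in $3$ sticks, consolidate at binding points'') runs into the problem that the cubic lattice only offers four half-plane directions around the $z$-axis, so most pages cannot be realized in their own half-plane; handling this is the real work.

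For comparison, the paper stays with the open-book picture: it isotopes the first three pages to $\theta=\pi/2,\pi,3\pi/2$ and realizes each of those arcs with $3$ sticks, then routes every remaining arc through the single half-plane $\theta=2\pi$, leaving it above or below according to the original page ordering; a generic such arc costs $6$ sticks, while a few special arcs (the two pages $P_a$, $P_b$ connected to the second page, and the two arcs following the first and third pages) cost $2,4,5,5$, plus one closing $z$-stick, for a total of $(3)(3)+2+4+5+5+1+6(\alpha-7)=6\alpha-16$; the cases $\alpha<7$ (trefoil and figure-eight) are checked separately --- a small-case caveat your count would also need. If you want to salvage the grid route, note that lifting all vertical segments of a grid diagram to the level $z=1$, keeping the horizontal segments at $z=0$, and joining them by $2\alpha$ unit $z$-sticks at the marked points yields an embedded conformation of the correct knot with $4\alpha$ sticks; this is a clean linear bound (and better than $6\alpha-16$ once $\alpha\geq 9$), but it does not establish the stated inequality for $\alpha\leq 8$, so you would still need either the paper's page-by-page count or separate treatment of the small cases.
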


\begin{proof}
Let $K$ be an minimal arc presentation with $\alpha$ pages having the $z$-axis as the binding axis. Ordering the pages based on their angle $\theta$ from the positive $x$-axis, isotope $K$ such that the first three pages lie at $\theta = \pi/2$, $\theta = \pi$, and  $\theta = 3\pi/2$ respectively, and such that the remaining pages lie in $3\pi/2 < \theta < 5\pi/2$. The arc in the second page cannot connect to either of the arcs in the first and third page, as if it did, we could lower the arc index. Let $P_a$ and $P_b$ be the pages of the arcs that this arc does connect to.

       Isotope the arcs of $K$ lying in each of the first three pages without leaving the pages such that they each consist of three sticks, one parallel to the $z$-axis and two parallel to either the $x$-axis or $y$-axis (see Figure \ref{Lattice Arc Index Diagram}). Isotope page $P_a$ to be the page given by $\theta = 2\pi$. The arc in this page can be realized by extending the connecting stick from the second page a distance     $\alpha$ units and adding two other sticks in that half-plane, one $z$-stick, and one $x$-stick taking us back to the $z$-axis. The arc in plane $P_b$ can be realized by extending another stick from the second plane across the $z$-axis into the half-plane given by $\theta= 2\pi$, and then adding four more sticks, one $y$-stick to leave the half-plane corresponding to $\theta = 2\pi$, one $z$-stick, one $y$-stick to return to the half-plane $\theta=2\pi$ and one $x$-stick to return to the $z$-axis. The $y$-sticks will be to the negative $y$ or positive $y$ side of the $\theta= 2 \pi$ plane depending on whether $P_b$ comes before or after $P_a$ in the sequence of half-planes. We can choose the length of the $x$-stick shorter than the $x$-stick in $P_a$ to allow this arc to pass over or under the arc in $P_a$ as needed.  
       
       Now all remaining arcs will  start in the half-plane $\theta = 2\pi$, leaving that plane below or above it, depending on whether their original plane came before or after $P_a$ in the sequence of pages. Orient the entire knot. For the two arcs that follow the arcs in the first and third page, we simply take an $x$-stick followed by a $y$-stick, a $z$-stick, a $y$-stick and an $x$-stick. For all other planes, we require a short $z$-stick together with the other sticks mentioned above. Finally, to close up at the end, we need another $z$-stick. We can pick the small $z$-sticks lying on the $z$-axis appropriately such that the result is a valid lattice conformation which is ambient isotopic to the original $K$. 
       
Thus, if there are at least seven pages, we can construct a lattice stick version of $K$ using $(3)(3) +2 + 4 + 5 + 5 + 1 + 6(\alpha - 7) = 6\alpha - 16$ sticks. In the case that there are fewer than seven pages, the only knots generated are the trefoil knot and the figure-eight knot, both of which have lattice stick conformations that beat this bound.

\begin{figure}
 \centering
 \includegraphics[height=2.4in]{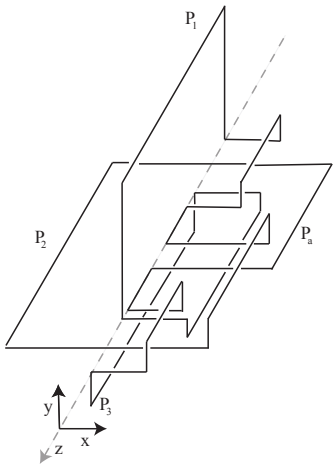}
 \caption{Constructing a cubic lattice conformation from an arc presentation.}
\label{Lattice Arc Index Diagram}
\end{figure}
\end{proof}

From \cite{Bae and Park}, we have $\alpha \leq c + 2$, which immediately yields the following corollary:

\begin{cor}
$s_{CL}[K] \leq 6c[K] -4$.
\end{cor}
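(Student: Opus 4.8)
The plan is to obtain this bound purely by chaining the two preceding inequalities: the lemma $s_{CL}[K] \leq 6\alpha[K] - 16$ proved just above, and the inequality $\alpha[K] \leq c[K] + 2$ quoted from \cite{Bae and Park}. Since the lemma is already established in full generality (the case of fewer than seven pages having been absorbed, as the trefoil and figure-eight beat $6\alpha-16$), no further geometric construction is needed here.

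Concretely, I would start from $s_{CL}[K] \leq 6\alpha[K] - 16$, substitute $\alpha[K] \leq c[K] + 2$ to get $s_{CL}[K] \leq 6(c[K]+2) - 16$, and simplify the right-hand side to $6c[K] - 4$. That is the entire argument. As a sanity check one can note consistency with the known small cases: for $3_1$ we have $6c-4 = 14 \geq 12 = s_{CL}[3_1]$, and for $4_1$ we have $6c-4 = 20 \geq 14 = s_{CL}[4_1]$, so the corollary is (as expected) far from sharp on the simplest knots but is valid there as well.

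There is essentially no obstacle: the only thing to be careful about is that the arc-index inequality of \cite{Bae and Park} is stated for all nontrivial knots and that the lemma's bound is likewise unconditional, so the substitution is legitimate without case analysis. If one wanted to be thorough, the single point worth a sentence is that both input inequalities hold for every knot type under discussion, so their composition does too.

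\begin{proof}
Combining the lemma with the inequality $\alpha[K] \leq c[K] + 2$ of \cite{Bae and Park}, we obtain
\begin{align}
s_{CL}[K] \leq 6\alpha[K] - 16 \leq 6(c[K] + 2) - 16 = 6c[K] - 4. \nn
\end{align}
\end{proof}
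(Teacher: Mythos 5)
Your proof is correct and is exactly the paper's argument: the corollary is obtained by substituting the Bae--Park bound $\alpha[K] \leq c[K] + 2$ into the lemma $s_{CL}[K] \leq 6\alpha[K] - 16$ and simplifying. Nothing further is needed.
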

 
\label{crossing}
\end{section}

\begin{section}{Links}

In this section, we find the lattice stick number for certain links of more than one component.

\begin{thm}
The lattice stick number of certain links are as listed in Table \ref{Flo:linktable}.

\begin{table}
\begin{centering}
\begin{tabular}{|c|c|}
\hline 
Links & $s_{CL}$\tabularnewline
\hline
\hline 
$0_{1}^{2}$ \Vsp & $8$\tabularnewline
\hline 
$2_{1}^{2}$ \Vsp & $8$\tabularnewline
\hline 
$7_{7}^{2}$ \Vsp & $16$\tabularnewline
\hline 
$8_{15}^{2}$ \Vsp & $18$\tabularnewline
\hline 
$8_{16}^{2}$ \Vsp & $18$\tabularnewline
\hline 
$6_{2}^{3}$ \Vsp & $12$\tabularnewline
\hline 
$6_{3}^{3}$ \Vsp & $12$\tabularnewline
\hline
\end{tabular}
\par\end{centering}
\caption{Lattice stick index of certain links.}
\label{Flo:linktable}
\end{table}
\end{thm}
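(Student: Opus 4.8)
The plan is to prove, for each link $L$ in Table \ref{Flo:linktable}, a lower bound on $s_{CL}[L]$ together with a matching lattice construction exhibited by an explicit diagram. The constructions draw on just three ingredients: a planar rectangle of $4$ sticks for each unknotted component, the known minimal conformations $s_{CL}[3_1]=12$ of \cite{Diao} and $s_{CL}[4_1]=14$ of \cite{Huh and Oh} for knotted components, and the clean $L$ and exterior $L$ surgery of Section \ref{composition} (or an ad hoc variant) to clasp and link these pieces together without introducing new sticks. So the substance of the proof is in the lower bounds.

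First I would record the elementary bound $s_{CL}[L]\ge 4m$ for any $m$-component link: each component is a closed lattice polygon, and projecting such a polygon to a coordinate axis yields a function whose signed stick-lengths cancel, so at least two axis directions are used and each with at least two sticks. This already settles four entries. For $0_1^2$ and $2_1^2$ it gives $s_{CL}\ge 8$, matched by two disjoint planar squares and by two interlocked perpendicular planar squares; for $6_2^3$ and $6_3^3$ it gives $s_{CL}\ge 12$, matched by three suitably interlocked planar squares and by three Borromean squares.

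For $7_7^2$, $8_{15}^2$ and $8_{16}^2$ the bound $4m=8$ is far from sharp, and the improvement comes from the knot types of the individual components. The key point is that the sticks belonging to a single component of a lattice conformation of $L$ form, by themselves, a lattice conformation of that component's knot type, so $s_{CL}[L]\ge\sum_i s_{CL}[K_i]$. Identifying a knotted component then pins down the value: a trefoil component contributes $12$ and an unknotted partner $\ge 4$, giving $s_{CL}[7_7^2]\ge 16$; a figure-eight component contributes $14$ alongside an unknotted component contributing $\ge 4$ (or a trefoil component contributes $12$ while the clasp pattern prevents its partner from lying in a plane, forcing $\ge 6$), giving $s_{CL}\ge 18$ for $8_{15}^2$ and $8_{16}^2$. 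When $L$ is non-split, one can alternatively push the bound up via Lemma \ref{lem:lowerbound} and Schubert's results on bridge index (\cite{Schubert}); either route meets the constructions.

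I expect the main obstacle to be the lower bounds for $8_{15}^2$ and $8_{16}^2$: ruling out conformations with $16$ or $17$ sticks. If a component can be certified knotted of the relevant type this is handed to us by the results above; the danger is a conformation in which the linking is distributed so thinly that no single component looks knotted. Excluding such conformations calls for the properly-leveled analysis of Section \ref{crossing}: bound the number of local $z$-extrema carried by each component, hence the $z$-sticks it uses, and then apply Remark \ref{CanMakeKnotDiagram} to see that the perturbed projection $P'$ must display every crossing of $L$, so an $8$-crossing link forces enough $x$- and $y$-sticks for the total to reach $18$. Making that case analysis airtight is essentially the whole content of the theorem.
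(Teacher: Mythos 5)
Your core argument coincides with the paper's: the only lower bound used there is $s_{CL}[L]\ge s_{CL}[K_1]+\dots+s_{CL}[K_n]$, which holds because the sticks of each component form, by themselves, a lattice conformation of that component's knot type, and this is matched against explicit conformations (the paper's Figure \ref{Flo:links1}); with $s_{CL}=4$ for the unknot, $12$ for $3_1$ and $14$ for $4_1$, the component sums give exactly the tabulated values $8,8,16,18,18,12,12$.

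Where you go astray is the final paragraph. There is no possible conformation of $8^2_{15}$ or $8^2_{16}$ ``in which the linking is distributed so thinly that no single component looks knotted'': the collection of component knot types is an invariant of the link type, so every lattice conformation of these links has a figure-eight component (hence at least $14$ sticks) and an unknotted component (hence at least $4$ sticks), and the bound $\ge 18$ follows with no further case analysis; ruling out $16$- and $17$-stick conformations requires nothing beyond the inequality you already stated. Deferring ``essentially the whole content of the theorem'' to a properly-leveled, crossing-number argument is therefore a misdiagnosis, and that tool could not do the job anyway: for $c=8$ the paper's crossing bound gives only $s_{CL}\ge 3\sqrt{10}<10$, nowhere near $18$. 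Likewise your parenthetical fallback --- that a trefoil component's clasp ``prevents its partner from lying in a plane, forcing $\ge 6$'' --- is unjustified as stated (a linked unknotted component can perfectly well be planar, as in the Hopf link) and is unnecessary once the component knot types are identified correctly, which is exactly what the paper's short proof rests on.
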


\begin{proof}
Note that the bound $s_{CL}[K] \geq 6b[K]$ holds only for nontrivial knots, which cannot lie entirely in a plane. As a consequence, this bound does {\em not} necessarily hold for arbitrary links, since one or more of the components may be contained in planes.
However, if $K_1,\dots,K_n$ are the components of a link $L$, we {\em do} have $s_{CL}[L] \geq s_{CL}[K_1] + \dots + s_{CL}[K_n]$. This lower bound together with the lattice conformations presented in Figure \ref{Flo:links1}
complete the proof.

\begin{figure}
\includegraphics[height=3in]{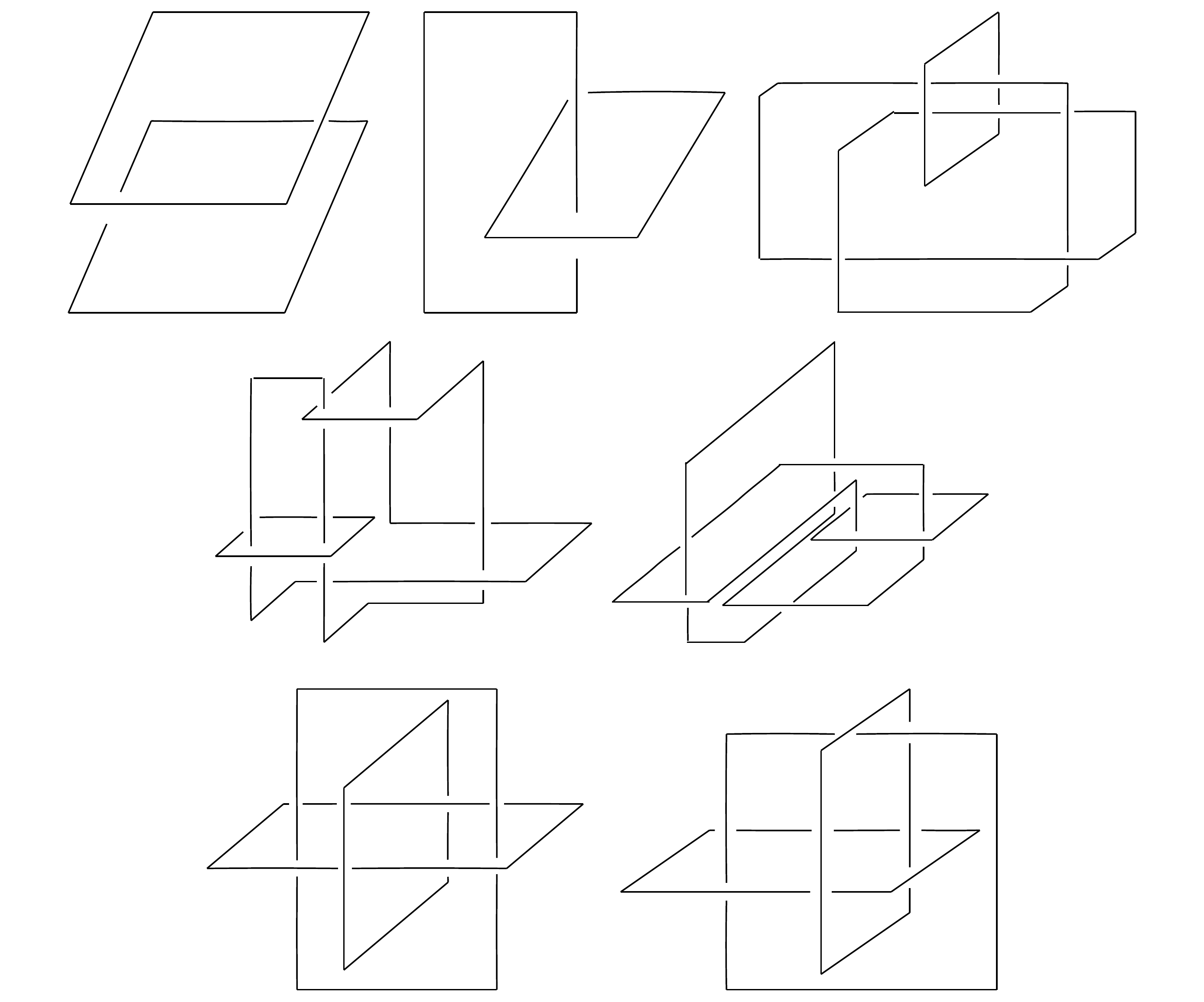}
\caption{Minimal lattice conformations of links $0_{1}^{2}$, $2_{1}^{2}$,
$7_{7}^{2}$, $8_{15}^{2}$, $8_{16}^{2}$, $6_{2}^{3}$, $6_{3}^{3}$.}
\label{Flo:links1}
\end{figure}
\end{proof}

\begin{proposition}
Let $L$ be a two-component link conformation with linking number $n$ and suppose that one of the components is entirely contained in a plane. Then the number of sticks in the conformation is at least $4n+4$.
\label{lem:Linkgeq4}
\end{proposition}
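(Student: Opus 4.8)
The plan is to work with a properly leveled conformation of $L$ and analyze the projection onto the plane containing the planar component. Let $K_1$ be the component lying in a plane; after a rotation we may assume this plane is $z = 0$, so $K_1$ consists entirely of $x$-sticks and $y$-sticks. Let $K_2$ be the other component, and let $P$ be the projection of the whole conformation down the $z$-axis. Since $K_1$ lies at $z=0$, all of the linking between $K_1$ and $K_2$ is ``visible'' in $P$: after perturbing to a valid diagram $P'$ as in Remark \ref{CanMakeKnotDiagram}, the linking number $n$ is (up to sign) half the signed count of crossings between the image of $K_1$ and the image of $K_2$, so there are at least $2|n|$ such crossings.

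First I would count the sticks of $K_2$ that can contribute a crossing with $K_1$. A crossing between the two components in $P'$ arises from an $x$-stick of one meeting a $y$-stick of the other (after the small perturbation; $z$-sticks are not involved in crossings). Since $K_1$ has only $x$- and $y$-sticks, each crossing uses either (an $x$-stick of $K_1$, a $y$-stick of $K_2$) or (a $y$-stick of $K_1$, an $x$-stick of $K_2$). Now I would bound how many crossings a single stick of $K_2$ can be responsible for. The key geometric observation is that an $x$-stick of $K_2$ crosses $K_1$'s image, which is a closed polygon in the plane; I want to say a single line can cross such a polygon in a controlled way, but the right bookkeeping is instead to pair crossings with the \emph{arcs of $K_2$} as they pass from $z>0$ to $z<0$ through the level $z=0$: each time $K_2$ crosses the plane $z=0$ it must use a $z$-stick, and between two consecutive such crossings the portion of $K_2$ lies entirely on one side of $z=0$, where it can be isotoped (in the projection) not to add to the linking. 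So the essential content is that the number of times $K_2$ pierces the plane $z=0$ controls $2|n|$, and each piercing costs a $z$-stick.

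The cleanest route I would actually pursue: $K_2$ is a closed curve meeting $z=0$ transversally in some even number $2m$ of points (after a tiny perturbation off the lattice if needed), and each such point lies on a $z$-stick of $K_2$; since $z$-sticks come in pairs joining $z$-maxima to $z$-minima and $K_2$ has at least $b[K_2]\ge 1$ maxima, we in fact need to relate $m$ to $|n|$. The algebraic linking number $n$ equals the signed intersection number of $K_2$ with a Seifert surface for $K_1$; taking the disk-like region bounded by $K_1$'s image in $z=0$ as (the relevant part of) this surface, $|n| \le m$, i.e. $K_2$ must pierce $z=0$ at least $2|n|$ times. Hence $K_2$ has at least $2|n|$ endpoints of $z$-sticks crossing level $z=0$, giving at least $2|n|$ $z$-sticks counted with multiplicity $\tfrac12$ — more carefully, at least $|n|$ up-crossings and $|n|$ down-crossings force at least... here I must be careful, since one long $z$-stick could in principle be traversed only once; a $z$-stick is traversed once, so $2|n|$ piercings means $2|n|$ distinct $z$-sticks is too strong, but $2|n|$ piercing points lie on $z$-sticks and each $z$-stick contributes at most its two endpoints' worth, actually each $z$-stick is pierced by $z=0$ in at most one point, so there are at least $2|n|$ $z$-sticks in $K_2$. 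Combined with the fact that a nontrivial (or even trivial, once it is not planar) $K_2$ with $z$-sticks also needs $x$- and $y$-sticks — indeed, any closed lattice polygon uses sticks in all three directions or lies in a plane, and $K_2$ cannot lie in a plane parallel to $z=0$ since it links $K_1$, nor in a plane containing the $z$-direction without failing to link — we get at least $2|n|$ $z$-sticks plus at least $2$ $x$-sticks plus at least $2$ $y$-sticks, hence at least $2|n| + 4$ sticks. But the claimed bound is $4n+4$, so I have under-counted by a factor of two in the $z$-direction.

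\textbf{The main obstacle} is exactly recovering the factor $4n$ rather than $2n$. The fix I would implement: $K_2$ must pierce the plane $z=0$ at least $2|n|$ times (as above), but also, between consecutive piercings $K_2$ travels into $z>0$ or $z<0$ and back, and each such excursion, if it is to be part of a reduced diagram, must be ``large'' enough to create the requisite crossings — more to the point, I would count crossings directly: each crossing between $K_1$ and $K_2$ in $P'$ involves an $x$- or $y$-stick of $K_2$, and I would argue that the $x$-sticks of $K_2$ together account for at most half the crossings and the $y$-sticks for the other half, then that the $x$-sticks of $K_2$ come in max/min pairs (Lemma \ref{lem:lowerbound} reasoning applied within the plane-projection), forcing enough of them; symmetrically for $y$-sticks and $z$-sticks. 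The honest plan is: (i) $2|n| \le \#\{\text{crossings of }K_1\text{ with }K_2\text{ in }P'\}$; (ii) each crossing is charged to a unique $(x\text{- or }y\text{-})$ stick of $K_2$; (iii) show each stick of $K_2$ is charged at most... no — rather, bound the crossing count by $|P_x(K_2)|\cdot(\text{something}) $; the tidiest is that crossings of $K_2$'s $x$-sticks with $K_1$'s $y$-sticks number at least $|n|$ and likewise for the swapped pair, and then the real input is that $K_2$, being a closed curve, needs its $x$-sticks to pair up and likewise $y$- and $z$-sticks, yielding $|P_x(K_2)|, |P_y(K_2)|, |P_z(K_2)|$ all even and positive; the bound $4|n|+4$ would then come from showing $|P_x(K_2)| + |P_y(K_2)| \ge 2|n| + 4$ together with $|P_z(K_2)| \ge 2|n|$, i.e. that crossings ``cost twice'' — once in the horizontal sticks creating them and once in the $z$-sticks needed to separate $z$-levels so the diagram is reduced. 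I expect the precise argument to hinge on: no stick of $K_2$ can be crossed by $K_1$'s polygon more times than roughly the number of $z$-sticks of $K_2$ adjacent to it allow, so crossings are in bijective-up-to-constant correspondence with $z$-stick/horizontal-stick incidences of $K_2$, and pushing this through carefully gives the factor $4$. This incidence-counting step is where the work is, and I would expect the authors' proof to do essentially this via the projection $P'$ and the crossing/linking bound of Remark \ref{CanMakeKnotDiagram}.
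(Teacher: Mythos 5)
Your proposal proves only half of the bound, and the missing half is precisely the content of the paper's argument. The piercing count you do carry out is fine: the signed intersection of $K_2$ with the disk bounded by $K_1$ in its plane forces at least $2n$ transversal crossings of that plane, hence at least $2n$ $z$-sticks in $K_2$ (the paper obtains these same $2n$ $z$-sticks as the sticks required between consecutive crossing-producing $xy$-sticks). What you never establish is that $K_2$ also needs at least $2n$ horizontal sticks. The key fact, which is the heart of the paper's proof, is: since $K_1$ is a simple closed polygon lying in the plane $z=0$, any single $x$- or $y$-stick of $K_2$ lies entirely above or entirely below that plane, so it is an overstrand at all of its crossings with $K_1$ or an understrand at all of them, and along the stick these crossings alternate between entering and leaving the region bounded by $K_1$, hence alternate in sign. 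Consequently each horizontal stick of $K_2$ contributes at most $1/2$ in absolute value to the linking number, which forces at least $2n$ horizontal sticks in $K_2$; interleaved with the $z$-sticks needed to pass between over- and under-levels this gives $4n$ sticks in $K_2$, and the planar component $K_1$, being a closed lattice polygon in a plane, supplies the remaining $4$.

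The fixes you sketch for the missing factor of two do not work as stated. The claim that at least $n$ crossings are of type ($x$-stick of $K_2$, $y$-stick of $K_1$) and at least $n$ of the swapped type is false; so is the inequality $|P_x(K_2)|+|P_y(K_2)|\ge 2n+4$; and so is your assertion that $K_2$ cannot lie in a plane containing the $z$-direction while linking $K_1$. A Hopf-link style conformation with $K_1$ a rectangle in $z=0$ and $K_2$ a rectangle in a vertical plane threading through it (linking number $1$, eight sticks, $K_2$ having two $x$-sticks, no $y$-sticks, two $z$-sticks) violates all three at once. You also omit $K_1$'s four sticks from one of your tallies, and they are exactly what produces the $+4$. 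As written, then, your argument yields a bound of roughly $2n$ plus a constant, and the decisive step — the per-stick linking contribution of at most $1/2$, applied to the horizontal sticks of $K_2$ — is missing.
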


\begin{proof}
Consider the projection $P$ of $L$ down the $z$-axis, and let $P'$ be a slight perturbation of $P'$ as in Remark \ref{CanMakeKnotDiagram}. As $L$ has a linking number of $n$, so does $P'$. Let $K_1$ and $K_2$ denote the knot components of $L$ and assume $K_1$ lies in a plane. Giving $K_2$ an orientation, we can therefore assume that $K_2$ crosses as an understrand from the inside to the outside of $K_1$ at least $n$ times and crosses as an overstrand from the outside to the inside of $K_1$ at least $n$ times. Note that a single stick of $K_2$ makes at most a contributions of $1/2$ to the linking number of $P'$. Then $K_2$ requires at least $2n$ $xy$-sticks, each contributing $1/2$ to the linking number of $P'$, and a $z$-stick between consecutive $xy$-sticks in order to enable the crossings.
\end{proof}

\begin{cor}
Let $n$ be the linking number of a 2-braid link. Then the two-braid links, excluding $4_1^2$ and $6_1^2$, can be realized with $4n+4$ sticks. 
\end{cor}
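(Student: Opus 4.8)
The plan is to identify the $2$-braid links explicitly and then exhibit, for all of them except $4_1^2$ and $6_1^2$, a lattice conformation with exactly $4n+4$ sticks; Proposition~\ref{lem:Linkgeq4} will then supply a matching lower bound among conformations having a planar component. A $2$-braid link is the closure of $\sigma_1^{2n}$ (odd powers close to knots, not links), i.e. the $(2,2n)$-torus link, and its linking number is $\pm n$; since $s_{CL}$ is a mirror invariant we may assume $n\ge 1$. For $n=1$ this is the Hopf link $2_1^2$, whose $8$-stick conformation is already recorded in Table~\ref{Flo:linktable} and Figure~\ref{Flo:links1}, so the content lies in the range $n\ge 2$.

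For the upper bound I would construct $L=K_1\cup K_2$ in which $K_1$ is a long planar rectangle ($4$ sticks) lying in the plane $z=0$ and bounding the axis-parallel rectangular disk $D$, while $K_2$ is a $4n$-gon. The target count $4n+4$ leaves $K_2$ with exactly $4n$ sticks, and the argument in the proof of Proposition~\ref{lem:Linkgeq4} then forces $K_2$ to consist of exactly $2n$ horizontal ($x$- or $y$-) sticks, each contributing $\tfrac12$ to the linking number, alternating around $K_2$ with exactly $2n$ vertical ($z$-) sticks; these vertical sticks therefore run alternately up and down, $n$ of each. The template is arranged so that the $n$ upward $z$-sticks have their feet inside the footprint of $D$ while the $n$ downward ones have their feet outside it, and so that every horizontal stick sits at level $z=\pm1$. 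Then $K_2$ meets $D$ positively and transversally along the $n$ upward sticks and nowhere else, so $\mathrm{lk}(K_1,K_2)=n$; routing the horizontal sticks in the evident staircase and reading off the projection via Remark~\ref{CanMakeKnotDiagram} one recognizes the standard alternating diagram of the $(2,2n)$-torus link. This is a ``repeating block'' construction --- passing from $n$ to $n+1$ inserts one upward $z$-stick, one downward $z$-stick and two horizontal sticks --- so by induction the conformation has $4n+4$ sticks, and since one component is planar Proposition~\ref{lem:Linkgeq4} confirms that $4n+4$ is the fewest sticks possible for a conformation of this kind.

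The main obstacle, and the source of the two exceptions, is embeddedness. The horizontal sticks of $K_2$ lying on a common $z$-level, together with the sticks of $K_1$, must be threaded past one another without intersecting in $\R^3$ and without creating crossings that would change the link type, and this forces the template to occupy a definite amount of room, available only once the rectangle $D$ is long enough, i.e. once $n$ is large enough. I expect a case check to show the routing goes through cleanly for every $n\ge 4$ (with $n=1$ handled separately as above), whereas for $n=2$ and $n=3$ the staircase is forced to collide with itself and the method does not yield a conformation with only $4n+4$ sticks; this is why $4_1^2$ and $6_1^2$ must be excluded from the statement. (It is not claimed that these two links admit no $4n+4$-stick conformation whatsoever, only that the construction above does not produce one.)
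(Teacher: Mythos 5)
There is a genuine gap, and it sits exactly where the whole content of the corollary lies: in the explicit construction. You commit to a template in which every horizontal stick of $K_2$ lies at level $z=\pm 1$, but no such conformation exists for any $n\ge 2$ --- in particular not for the cases $n\ge 4$ that the corollary actually needs. Indeed, with your stick count the structure you (correctly) extract from Proposition \ref{lem:Linkgeq4} forces each of the $2n$ horizontal sticks of $K_2$ to cross the planar rectangle $K_1$ exactly once, alternating over/under, with a $z$-stick between consecutive ones. At an outside vertex the two adjacent horizontal sticks are forced to be parallel (from a point beyond, say, the right edge of the rectangle only an $x$-stick can re-enter), hence collinear since they share that vertex; so each excursion of $K_2$ outside the rectangle runs out and back along one line, and that line carries two interior vertices of $K_2$. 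If all horizontal sticks sit at $z=\pm1$, every $z$-stick at an interior vertex spans the whole slab $[-1,1]$; whichever of the two interior vertices on a given line is shallower, the horizontal stick based at the deeper one passes through the shallower vertex's $(x,y)$-position at height $\pm1$, i.e.\ through a point of the $z$-stick (and of another horizontal stick) there, and if they are equally deep the two $z$-sticks overlap. Either way $K_2$ self-intersects. (For $n=1$ the two vertices coincide and the curve simply closes up there, which is why the $8$-stick Hopf link is fine.) The true construction, which is what Figure \ref{Flo:braids} exhibits, must stagger the horizontal sticks over several $z$-levels, and choosing those levels coherently around the cycle is the nontrivial point: for instance, making every returning over-stick shorter than the departing under-stick produces a cyclic chain of strict inequalities among the levels that cannot be satisfied, so the pattern of deep and shallow excursions and their heights must be coordinated. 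Your ``evident staircase'' plus ``I expect a case check to show the routing goes through cleanly for every $n\ge4$'' defers precisely this verification, as well as the identification of the resulting link as the $(2,2n)$-torus link, which does not follow from the linking number alone; so the upper bound $4n+4$ is not actually established by the proposal.

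Two smaller points. The paper's reason for excluding $4_1^2$ and $6_1^2$ is not that a particular routing gets crowded for small $n$: Lemma \ref{lem:LinkLowerBound} shows that any conformation with a planar component and linking number $2$ or $3$ needs at least $4n+5$ sticks, so no construction of this shape can exist there (and indeed $s_{CL}[4_1^2]=13$). For the literal statement this is only a side remark, since the corollary claims nothing about the excluded links, but it shows the obstruction is structural rather than an artifact of one template. Also, the matching lower bound you draw from Proposition \ref{lem:Linkgeq4} is not needed: the corollary is purely an upper-bound (realizability) claim.
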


\begin{proof} Note that the two exceptions have linking numbers  2 and 3, cases that are addressed in the next lemma. The Hopf link, with linking number 1,  can obviously be realized with 8 sticks, and Figure \ref{Flo:braids} demonstrates how one can construct the other 2-braid links except $4_1^2$ and $6_1^2$ with $4n+4$ sticks.
\end{proof}

\begin{figure}
\includegraphics[height=1.5in]{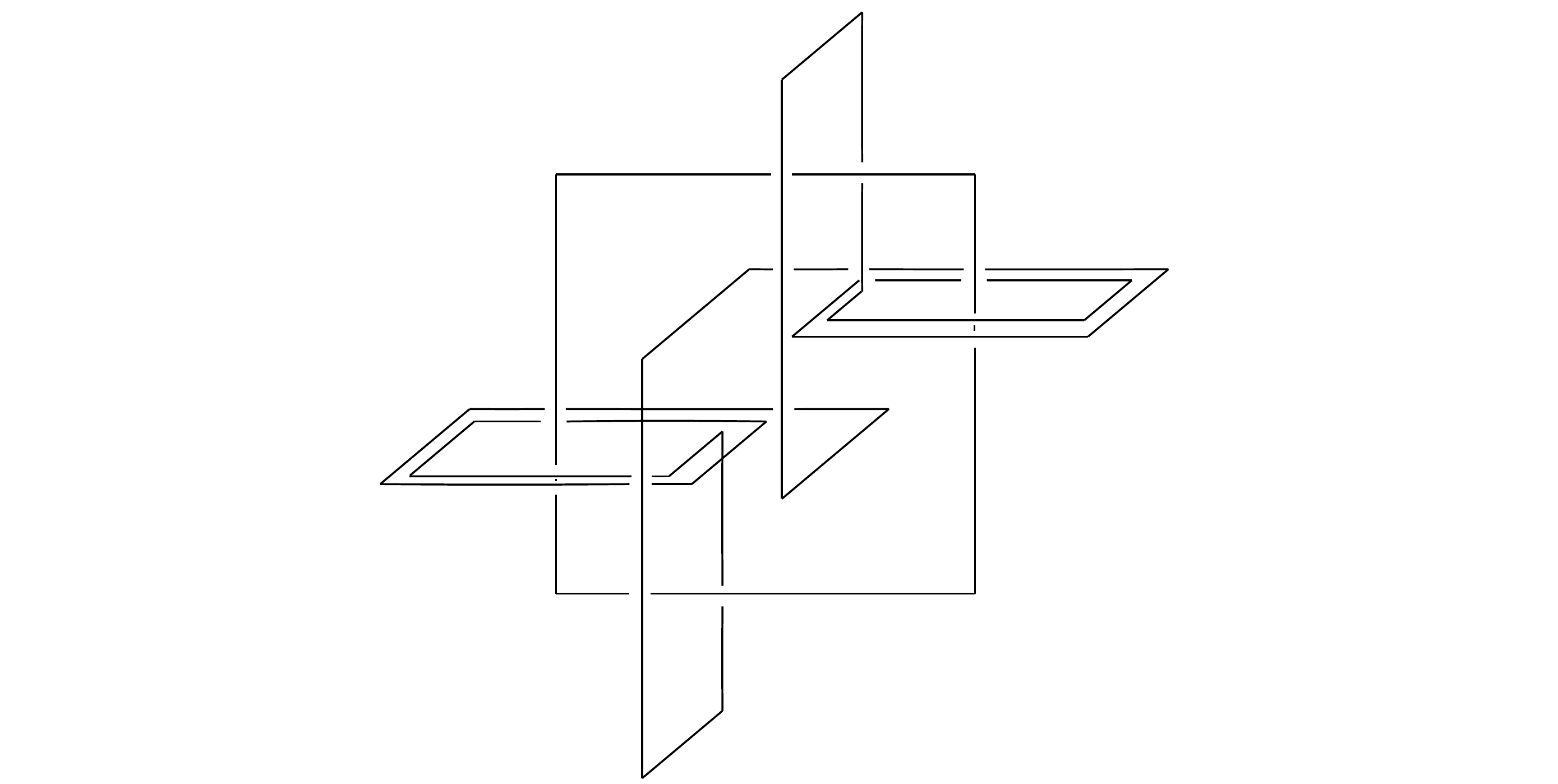}
\caption{Two-braid link with linking number $6$.}
\label{Flo:braids}
\end{figure}

\begin{lemma}
Let $L$ be a two-component cubic lattice link conformation with linking number $n=2$ or $3$ and suppose that one of the components lies entirely in a plane. Then the number of sticks in the conformation is at least $4n+5$.
\label{lem:LinkLowerBound}
\end{lemma}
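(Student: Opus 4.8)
The plan is to argue by contradiction, extracting a very rigid structure from the equality case of Proposition \ref{lem:Linkgeq4}. Suppose $L$ has exactly $4n+4$ sticks. Permuting the coordinate axes, assume the planar component $K_1$ lies in the plane $z=c$, so the image $R:=\pi(K_1)$ under the projection $\pi$ down the $z$-axis is an axis-parallel rectangle. Re-examining the proof of Proposition \ref{lem:Linkgeq4}: equality forces $K_1$ to consist of exactly four sticks (hence to be $\partial R$ at height $c$) and $K_2$ to consist of exactly $2n$ sticks parallel to the $xy$-plane together with exactly $2n$ $z$-sticks; since two $z$-sticks of $K_2$ cannot be consecutive (they would merge), these strictly alternate with the $2n$ $xy$-sticks around $K_2$. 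Moreover, every crossing of $K_2$ with $K_1$ in the diagram $P'$ of Remark \ref{CanMakeKnotDiagram} lies on an $xy$-stick of $K_2$, and $\partial R$ meets the projected line of any such stick in at most two points, so the only way to realize $\mathrm{lk}(K_1,K_2)=\pm n$ (with each $xy$-stick contributing $0$ or $\pm\tfrac12$ to it) is for every $xy$-stick of $K_2$ to cross $\partial R$ exactly once, all with the same sign.

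Next I would pass to the picture in the plane $z=c$. Now $\pi(K_2)$ is a closed, possibly self-intersecting rectilinear $2n$-gon whose edges are the $xy$-sticks of $K_2$ and whose vertices are its $z$-sticks; because each edge meets $\partial R$ exactly once, the vertices alternate between the interior and the exterior of $R$, with $n$ of each. Tracking the $z$-coordinate along $K_2$ as in the proof of Proposition \ref{lem:Linkgeq4}, consecutive $xy$-sticks lie on opposite sides of the level $z=c$, so every $z$-stick of $K_2$ crosses $z=c$; hence $K_2$ meets the plane $z=c$ in exactly $2n$ points, which project to the $2n$ vertices of $\pi(K_2)$ and therefore include $n$ points in the interior of $R$ and $n$ outside. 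Since $K_2$ alternates above and below $z=c$ between consecutive such points, these $2n$ points carry two perfect matchings, one from the ``above'' arcs and one from the ``below'' arcs, each of which must be realizable disjointly (in the appropriate half-space) by the ``vertical--horizontal--vertical'' arcs that make up $K_2$. Two facts will drive the case analysis: $K_1\cap K_2=\varnothing$, so no two $z$-sticks of $K_2$ sharing a projected point may both cross $z=c$ (in particular all $2n$ $z$-sticks sit over distinct points), and $K_2$ is embedded in $\mathbb{R}^3$. (If all $xy$-sticks of $K_2$ point the same way, then $K_2$ is itself a planar rectilinear $4n$-gon in a plane perpendicular to that of $K_1$, subject to the same constraints; this degenerate case is handled in parallel.)

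Finally I would run the finite case check for $n=2$ and $n=3$. For $n=2$, $\pi(K_2)$ is a rectilinear quadrilateral: a genuine rectangle is impossible, since if every edge of one axis-parallel rectangle met the boundary of another exactly once the first would have to lie in the interior of the second; the only other edge-direction pattern collapses a pair of vertices to one point, and then either $R$ is forced to contain a point it must avoid or two interior $z$-sticks of $K_2$ overlap; and in the planar-$K_2$ subcase, requiring $K_2$ to be embedded forces the cyclic order (along $K_2$) of its vertical edges to be the left-to-right order of their $x$-coordinates, which is incompatible with the interior/exterior alternation. For $n=3$, $\pi(K_2)$ is a rectilinear hexagon; there are more edge-direction patterns and more placements of $R$ to enumerate, but each is excluded by the same two facts. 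The main obstacle is precisely this enumeration for $n=3$: one must carefully handle self-intersecting projections with repeated vertices and degenerate patterns, and verify that no arrangement simultaneously satisfies the alternation, the single-crossing condition, embeddedness of $K_2$, and disjointness from $K_1$. Once every case is ruled out, a $(4n+4)$-stick conformation cannot exist, so $L$ has at least $4n+5$ sticks.
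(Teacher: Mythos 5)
Your setup matches the paper's: assume a $(4n+4)$-stick conformation, use the equality case of Proposition \ref{lem:Linkgeq4} to force $K_1$ to be a four-stick rectangle and $K_2$ to be $2n$ $xy$-sticks strictly alternating with $2n$ $z$-sticks, with each $xy$-stick crossing $K_1$ exactly once and with consistent sign, and with $K_2$ alternating above and below the plane of $K_1$. But from that point on you and the paper part ways, and your route is not actually completed. You propose to finish by enumerating the possible rectilinear $2n$-gon projections of $K_2$ relative to the rectangle $R$, you only sketch the $n=2$ cases, and for $n=3$ you state outright that the enumeration of self-intersecting and degenerate patterns is ``the main obstacle'' and is left unverified. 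Since the entire content of the lemma beyond Proposition \ref{lem:Linkgeq4} is exactly the exclusion of these configurations, deferring that enumeration is a genuine gap: as written, nothing rules out a $16$- or $28$-stick conformation for $n=3$ (and even your $n=2$ discussion leans on assertions such as the treatment of the ``collapsed vertex'' pattern that would need to be argued, not listed).

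The paper avoids the enumeration with one structural observation you are missing: because all $4n+4$ sticks are already spoken for, $K_2$ must close up using only its $2n$ $xy$-sticks, which forces the inner endpoints of all $2n$ $xy$-sticks (those projecting inside $R$) to project to a \emph{single} point; consequently the $n$ inner $z$-sticks all lie on one vertical line. Each such $z$-stick must join an endpoint below the plane of $K_1$ to an endpoint above it, so each must pass through the one point where that line meets the plane --- but distinct, non-adjacent sticks of $K_2$ cannot overlap, so at most one can do so, a contradiction as soon as $n\geq 2$. (Useful ingredients for that coincidence-of-endpoints step are the facts the paper records, e.g.\ that a pair of $xy$-sticks joined by an outer $z$-stick must be parallel, which makes consecutive projected edges collinear through the outer vertices.) If you want to salvage your approach, you either need to supply the full $n=3$ case analysis you postponed, or replace it with an argument of this concentration-of-endpoints type; as submitted, the proof is incomplete.
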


\begin{proof}
Assume for contradiction that $L$ is composed of at most $4n + 4$ sticks, and as in the previous proof, let $K_{1}$ and $K_{2}$ denote the knot components of $L$, where $K_{1}$ lies entirely in the $xy$-plane and in the projection down the $z$-axis,  that $K_2$ crosses as an understrand from the inside to the outside of $K_1$ at least $n$ times and crosses as an overstrand from the outside to the inside of $K_1$ at least $n$ times. Then $K_2$ requires at least $2n$ $xy$-sticks each contributing $1/2$ to the linking number of $P'$, and we need a $z$-stick between these consecutive $xy$-sticks in order to enable the crossings, amounting to an additional $2n$ sticks. We can then assume that $K_1$ is a rectangle made from four sticks, and  the sticks of $K_2$ follow the repeating pattern: $xy$-stick passing over $K_1$ from outside to inside, descending $z$-stick, $xy$-stick passing
under $K_1$ from inside to outside, ascending $z$-stick, $xy$-stick passing over $K_1$ from outside to inside, etc.  A pair of  $xy$-sticks connected by a $z$-stick projecting to outside of $K_1$ must both be either $x$-sticks or $y$-sticks.

Since this accounts for all $4n+4$ sticks, the endpoints of all of the $2n$ $xy$-sticks that project to inside $K_1$ must all project to the same point, so as to avoid the necessity of additional $xy$-sticks to close up the knot. For convenience, assume that they project to the origin in the $xy$-plane. However then the $z$-sticks that connect these endpoints in pairs must all lie on the $z$-axis. But in order to obtain the requisite linking number, each interior endpoint of an understrand, which lies beneath the xy-plane, must connect by a $z$-stick to the interior endpoint of an overstrand, which lies above the $xy$-plane. However, at most one of the connecting $z$-sticks on the $z$-axis can pass through the origin, a contradiction.
\end{proof}

\begin{thm}
The lattice stick index of $4_{1}^{2}$ is $13$.
\end{thm}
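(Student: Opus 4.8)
The plan is to prove $s_{CL}[4_1^2]\le 13$ and $s_{CL}[4_1^2]\ge 13$. The upper bound is a direct construction: take one component to be a planar rectangle made of $4$ sticks, and the other to be a $9$-stick ``coil'' threading twice through it---four $xy$-sticks realizing the four crossings of the standard $(2,4)$-torus-link diagram, four $z$-sticks placed between consecutive $xy$-sticks to route the coil alternately over and under, and one further $xy$-stick to close the loop. Checking that the result has linking number $\pm 2$ with each component unknotted shows it is a conformation of $4_1^2$ using $4+9=13$ sticks (the value $4n+5$ predicted for $n=2$ by Lemma~\ref{lem:LinkLowerBound}).

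For the lower bound, let $L$ be any lattice conformation of $4_1^2$ with unknotted components $K_1,K_2$ and $|\mathrm{lk}(K_1,K_2)|=2$, and suppose for contradiction that $L$ has at most $12$ sticks. \emph{Case 1} (some component of $L$ lies in a plane): Lemma~\ref{lem:LinkLowerBound} with $n=2$ already gives $s_{CL}(L)\ge 4n+5=13$, a contradiction. \emph{Case 2} (neither component lies in a plane): a non-planar closed lattice polygon uses sticks in all three coordinate directions, and a direction used by a closed polygon must carry at least two of its sticks (a single stick in some direction would prevent closure); hence $K_1,K_2$ each have at least $6$ sticks, so $s_{CL}(L)=12$ and each of $K_1,K_2$ has exactly $6$ sticks.

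Thus Case 2 reduces to ruling out linking number $2$ between two $6$-stick unknots. Running the argument of the remark after Lemma~\ref{lem:lowerbound} for a non-planar unknot (with ``$b[K]$'' replaced by $1$), a $6$-stick unknot has exactly two sticks in each coordinate direction, each coordinate takes exactly two values along it, and so its six vertices are six of the eight corners of a rectangular box joined in a hexagonal cycle. Hence, in the perturbed $z$-projection $P'$ of Remark~\ref{CanMakeKnotDiagram}, $K_1$ and $K_2$ project onto axis-parallel rectangles $R_1,R_2$, each traced once, with the two $z$-sticks of $K_i$ projecting to two corners of its rectangle. A short count of how the horizontal sides of one rectangle can meet the vertical sides of the other shows $\partial R_1\cap\partial R_2$ has at most four points, and exactly four only when one rectangle is ``threaded through'' the other (both its vertical sides crossing both horizontal sides of the other). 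In that configuration I would label the four crossings cyclically and record the $z$-level of each of $K_1,K_2$ at each crossing; since $K_i$ has only two $z$-sticks, these $z$-levels must agree on two prescribed pairs among the four crossings, and a brief inequality chase on the four heights shows the signed crossing count cannot be $\pm 4$, i.e.\ $|\mathrm{lk}(K_1,K_2)|\le 1$---contradicting Case 2. Therefore $s_{CL}[4_1^2]\ge 13$, and combined with the construction, $s_{CL}[4_1^2]=13$.

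The main obstacle is the last step of Case 2---the rigidity argument that no pair of $6$-stick unknots has linking number $2$. Its two ingredients, that two axis-parallel rectangles cross at most four times and that having only two $z$-sticks per component tightly restricts which crossings can be over-crossings, are each easy, but threading the orientations and crossing signs through them to obtain the $\pm 4$ impossibility is the one genuinely delicate computation; it is also the part easiest to miss, since Lemma~\ref{lem:LinkLowerBound} by itself handles only conformations with a planar component.
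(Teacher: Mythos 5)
Your overall skeleton matches the paper's: the upper bound is a $13$-stick construction, the planar case is killed by Lemma \ref{lem:LinkLowerBound}, and in the non-planar case each component is forced to be a $6$-stick hexagon with two sticks in each coordinate direction (your argument for this is fine and is exactly what the paper asserts). Where you diverge is the final rigidity step, and that is where there is a genuine gap. You write ``hence, in the perturbed $z$-projection $P'$, $K_1$ and $K_2$ project onto axis-parallel rectangles.'' That is false in general: of the two $6$-stick conformations (Figure \ref{Flo:6sticks}), the second type has its two $x$-sticks at the same $y$-value and its two $y$-sticks at the same $x$-value, so its $z$-projection is a bent segment traversed back and forth, not an embedded rectangle. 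So if one component is of this type with $z$ as its degenerate direction, your rectangle-crossing analysis does not apply as stated. The fix is available but must be said: each $6$-stick component has at most one degenerate coordinate direction (the first type has none), so among the three coordinate axes there is one that is non-degenerate for \emph{both} components; rotate so that this common good axis is $z$. In addition, your decisive step --- ``a brief inequality chase on the four heights shows the signed crossing count cannot be $\pm 4$'' --- is only promised, not carried out. It does work: with exactly four crossings, $|\mathrm{lk}|=2$ forces all four signs equal; since opposite sides of an embedded rectangle are traversed in opposite directions, the orientation factors at the four crossings force the over/under pattern to alternate, i.e.\ $h(A_1)>g(B_1)$, $h(A_1)<g(B_2)$, $h(A_2)<g(B_1)$, $h(A_2)>g(B_2)$ (or the mirror), where each side is a single stick at constant height --- a cyclic chain of strict inequalities that is impossible. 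You should also verify that two axis-parallel rectangle boundaries meet in at most four points and that four forces the ``threaded'' configuration (true, but it needs the short argument that the two crossing types exclude each other).

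For comparison, the paper avoids putting $K_2$ in good position at all: it only positions $K_1$ so that its $z$-projection is a rectangle, and then argues stick-by-stick about $K_2$: a single stick of $K_2$ crossing $K_1$ twice would contribute $1$ to the linking number, but then its parallel partner forces the remaining four sticks of $K_2$ into a single plane $x=i$ or $x=j$, killing the linking number; so each of the four $xy$-sticks of $K_2$ must contribute $1/2$, which forces all four inner endpoints to project to one point --- impossible for a single closed component. Your route, once repaired by the common-axis observation and with the sign/height contradiction written out, is a legitimate alternative; as submitted, however, the projection claim is unjustified (and false for the $z$-axis specifically) and the key computation is missing.
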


\begin{proof}
The linking number of  $4_{1}^{2}$ is $n=2$. Suppose for contradiction that we have a cubic lattice conformation $L$ of $4_{1}^{2}$ with $12$ sticks or less. Let $K_{1}$ and $K_{2}$ denote the knot components of $L$. Consider the projection $P$ of $L$ into the $xy$-plane, and let $P'$ be a slight perturbation of $P$ as in Remark \ref{CanMakeKnotDiagram}.

\begin{figure}
\includegraphics[width=3.5in]{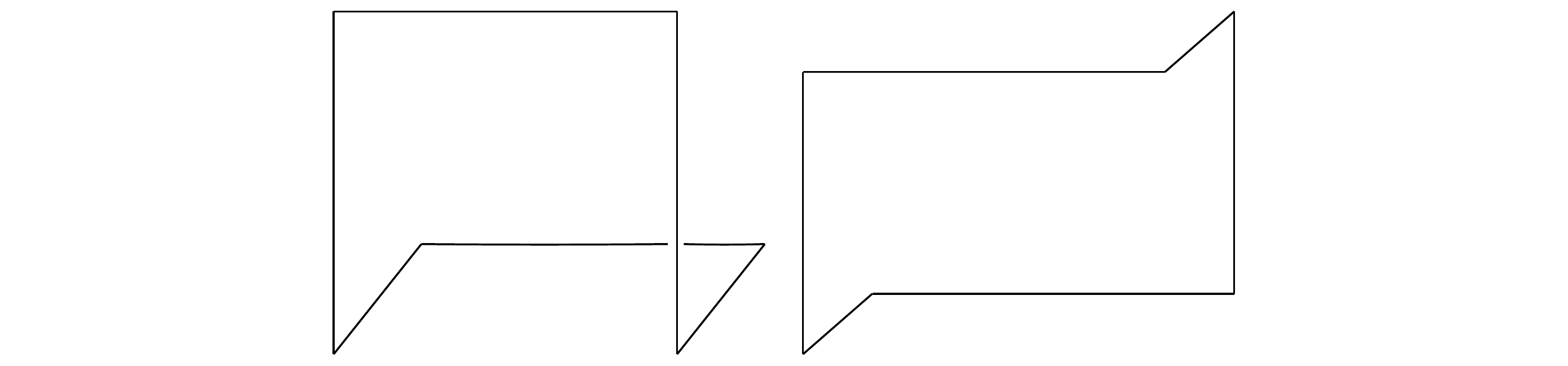}\caption{Six stick trivial components.}
\label{Flo:6sticks}
\end{figure}

By Lemma \ref{lem:LinkLowerBound}, neither component may lie entirely in a plane. Therefore both $K_1$ and $K_2$ are constructed from $6$ sticks. In particular, $K_1$ and $K_2$ are each constructed from $2$ parallel sticks in each direction, and it is not hard to see that the two constructions shown in Figure \ref{Flo:6sticks} are the only possible $6$-stick lattice conformations up to rotation and changing the lengths of sticks. We can assume that $K_1$ is positioned such that it $z$-projects to a rectangle. Then in the $xy$-plane it makes sense to talk about the inside and outside of $K_1$, and as before we can assume $K_2$ crosses as an understrand from the inside to the outside of $K_1$ at least twice and crosses as an overstrand from the outside to the inside of $K_1$ at least twice. A single stick of $K_2$ can cross $K_1$ in $P'$ at most twice, and such a stick may contribute $1$ to the linking number. If we have such a stick, say $x_{ij}$, then there must be a parallel stick which either makes the opposite contribution to the linking number or does not contribute to the linking number (since there are two sticks in each direction and $K_2$ is a closed loop). Either case is clearly impossible, since then the remaining four sticks of $K_2$ lie in either the plane $x=i$ or $x=j$ and therefore cannot contribute to the linking number. We conclude that each of the four $xy$-sticks of $K_2$ must contribute $1/2$ to the linking number. All four inner endpoints must project to the same $xy$ point in $P$, which is impossible without connecting oppositely oriented strands of $K_2$ or making $K_2$ into a two component link. 

Figure \ref{Fig:braids4,6} completes the proof.

\begin{figure}
\includegraphics[height=1.2in]{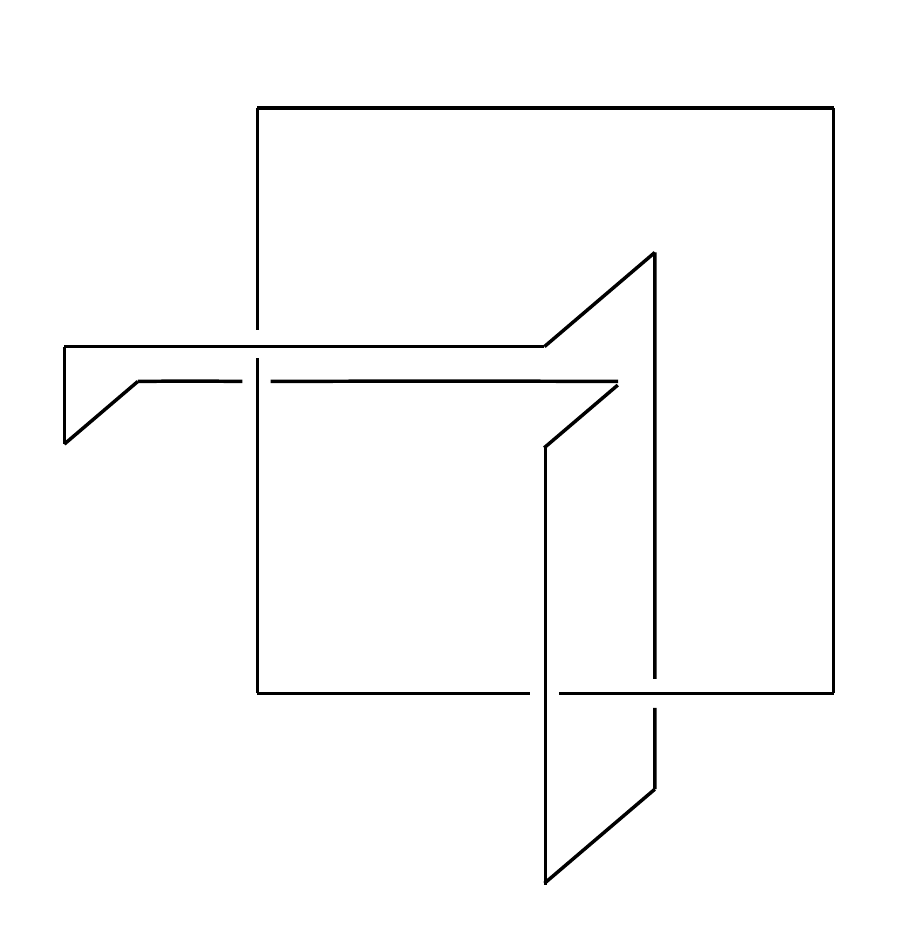}
\caption{Minimal lattice conformation of link $4_{1}^{2}$.}
\label{Fig:braids4,6}
\end{figure}
\end{proof}

\label{links}
\end{section}

\end{document}